\newtheorem{theorem}{Theorem}
\newtheorem{corollary}{Corollary}
\newtheorem{definition}{Definition}
\newtheorem{example}{Example}
\newtheorem{lemma}{Lemma}
\newtheorem{remark}{Remark}
\numberwithin{equation}{section}
\numberwithin{theorem}{section}
\numberwithin{lemma}{section}
\numberwithin{corollary}{section}
\numberwithin{definition}{section}
\numberwithin{example}{section}
\numberwithin{remark}{section}
\begin{document}

\title[Nonlinear Evolution in Particular C*-Algebras of Operators]{On the Approximation of Nonlinear Evolution Equations in Particular C*-Algebras of Operators}
\author{Fredy Vides
%\\Auxiliar Professor of Mathematics
%\\Escuela de Matem{\'a}tica y Ciencias de la Computaci{\'o}n
%\\Universidad Nacional Aut{\'o}noma de Honduras
%\\ \url{fvides@unah.edu.hn}
%\\ \url{http://fredyvides.6te.net}
%\\http://fredyvides.6te.net
%\\
%\\Stanly Steinberg
%\\Professor of Mathematics
%\\Department of Mathematics and Statistics
%\\University of New Mexico
%\\Albuquerque NM 87131-1141 USA
%\\stanly@math.unm.edu
%\\http://math.unm.edu/\textasciitilde stanly/
}
\address
{Escuela de Matem{\'a}tica y Ciencias de la Computaci{\'o}n \newline%
\indent Universidad Nacional Aut{\'o}noma de Honduras}%
\email{fvides@unah.edu.hn}
\urladdr{http://fredyvides.6te.net}
\keywords{Nonlinear Evolution Equations, Particular C*-algebras, Discretizable Hilbert spaces, Exactly Factorizable Operators.}
\subjclass[2010]{Primary 47N40, 65J15; Secondary 47J35}
\thanks{This research has been performed in part thanks to the financial support of the School of Mathematics and Computer Science of the National University of Honduras.}
\date{\today}

\begin{abstract}
In this article we deal with the approximation of solutions of nonlinear evolution equations of the form $A(u(t))+f(u(t))=u'(t)$, the numerical analysis of solutions to this problems will be performed using some methods from particular algebras of operators which are sometimes represented by unital subalgebras of the unital C*-algebras of operators that are generated by some basic operators say $\mathbf{1},a,\mathcal{D}(\cdot)\in\mathcal{L}(H^m(G))$ that in some suitable sense are related to the operator $A(\cdot)\in\mathcal{L}(H^m(G))$ in the evolution equations, particular cases where the operator algebras do not verify the C*-identity with respect to the norm chosen are also studied, when applicable basic C*-algebra techniques are implemented to perform some estimates of numerical solutions to some types of problems, in all this work expressions like $H^m(G)$ will represent a prescribed discretizable Hilbert space with $G\subset\subset\mathbb{R}^n$.
\end{abstract}
\maketitle
%\tableofcontents

%\titlepage
\section{Introduction}

In this work we will focus our attention in nonlinear evolution equations of the form:
\begin{equation}
\left\{
\begin{array}{l}
A(u(t))+f(u(t))=u'(t) \\
u(0)=u_0, u_0\in H^m(G)
\end{array}
\right .
\label{eq1}
\end{equation}
with $u\in H^m(G_\tau)$ and where $H^m(G),H^m(G_\tau)$ are prescribed Hilbert spaces with $G\subset\subset\mathbb{R}^n$ and $G_\tau:=G\times[0,T], T>0$. In the last equation we have that $A(\cdot)\in\mathcal{L}(h^n(G))$ is an operator variable in time that represents in some suitable sense the boundary conditions of the the initial boundary value problem described by \eqref{eq1}.

We will call any C*-subalgebra $\mathscr{A}(H^n(G))$ of the unital C*-algebra $\mathscr{A}(H^n(G))$ of operators generated by $\mathbf{1},a\in\mathcal{L}(H^m(G))$ such that $\mathbf{1},A(\cdot)\in \mathscr{A}(H^m(G))$, a particular C*-algebra.

In this work, to obtain discrete representations of the operators involved in the semilinear boundary value problems, we will use an abstract procedure known as particular representation method, that is based in the ideas presented by Steinberg and Robidoux in \cite{Steinberg2004} and \cite{SteinRob}, and is applicable to compatible discretization techniques.

\section{Background}

In this section we will present some important concepts that are needed to deduce many of the results presented in this work, the first concepts presented will be those related with sesquilinear forms of operators in prescribed Hilbert spaces $H^n(G)$.

If in a prescribed Hilbert space $H^n(G)$ we have that the inner product $\scalprod*{H^n(G)}{\cdot}{\cdot}$ is related to a map $\mathcal{M}[\cdot](\cdot):H^n(G)\times H^n(G)\rightarrow\mathbb{C}$ by the expression:
\begin{equation}
\scalprod*{H^n(G)}{x}{y}:=\mathcal{M}[x](y)
\end{equation}
then for any operator $A\in \mathscr{A}(H^n(G))$ with $\mathscr{A}(H^n(G))$ a particular C*-algebra of operators we can define the related sesquilinear form by.

\begin{definition}Sesquilinear form of an operator. For any operator $A\in \mathscr{A}$ with $\mathscr{A}:=\mathscr{A}(H^n(G))$ a particular C*-algebra of operators in a prescribed Hilbert space $H^n(G)$ the form $\mathcal{A}[x](y)$ defined by
\begin{equation}
\mathcal{A}[x](y):=\mathcal{M}[Ax](y)=\scalprod*{H^n(G)}{Ax}{y}
\end{equation}
will be called sesquilinear form of the operator $A$.
\end{definition}

\subsection{Contractions}
In this section we will present some important definitions and results related with contractions wich are operators described by the following

\begin{definition} An operator $T\in\mathcal{B}(X)$ with $X$ a Banach space, for wich 
\begin{equation}
\norm{}{T(x)-T(y)}\leq\norm{}{x-y}, x,y\in X 
\end{equation}
is called a contraction. If there is a $K<1$ for wich $\norm{}{T(x)-T(y)}\leq K\norm{}{x-y}$, T is called a strict contraction.
\end{definition}

\begin{theorem}\label{contraction}Contraction Mapping Principle. A strict contraction $T\in\mathcal{B}(X)$ on a Banach space $X$ has a unique fixed point, ie., there exists a unique $x\in X$ such that $T(x)=x$.
\end{theorem}
\begin{proof}
First let us prove uniqueness. If $T(x)=x, T(y)=y$, then $\norm{}{x-y}=\\\norm{}{T(x)-T(y)}\leq K\norm{}{x-y}$. Since $K<1$ and $\norm{}{x-y}\geq 0$, we conclude $\norm{}{x-y}=0$, i.e. $x=y$. To prove existence, we first note that $T$ is automatically continuous since $\norm{}{x-y}<K^{-1}\varepsilon$ implies $\norm{}{T(x)-T(y)}<\varepsilon$. Now, let $x_0$ be arbitrary and let $x_n:=T^n(x_0)$. We will show that $\{x_n\}$ is Cauchy.
\begin{eqnarray}
\norm{}{x_n-x_{n+1}}&=&\norm{}{T(x_{n-1})-T(x_n)}\leq K\norm{}{x_{n-1}-x_n}\\
&\leq&K^2\norm{}{x_{n-2}-x_{n-1}}\\
&\cdots&\leq K^{n-1}\norm{}{x_1-x_0}
%\label{eq:}
\end{eqnarray}
Thus if $n>m$,
\begin{eqnarray}
\norm{}{x_n-x_m}&\leq&\sum_{j=m+1}^{n}\norm{}{x_j-x_{j-1}}\label{approx1}\\
&\leq&K^m(1-K)^{-1}\norm{}{x_0-x_1}\conv{}{n\to\infty}0\label{approx2}
\end{eqnarray}
Thus ${x_n}$ is Cauchy, so $x_n\conv{}{}x$ for some x. Since T is continuous, $T(x)=\lim Tx_n=\lim x_{n+1}=x$ wich proves the theorem.
\end{proof}

From expressions \eqref{approx1} and \eqref{approx2} in the proof presented here, taking $n\to \infty$, we can obtain the following.
\begin{remark}\label{rcontraction}
A fixed point $x\in X$ of a contraction $T\in\mathcal{B}(X)$ in a Banach space $X$, satisfies the following estimate
\begin{equation}
\norm{}{x-x_m}\leq K^m(1-K)^{-1}\norm{}{x_1-x_0}.
\end{equation}
\end{remark}

\subsection{Elliptic Operators}

A differential operator defined for any prescribed $u\in H^2(G)$ with $G\subset\subset\mathbb{R}^n$ in the form:
\begin{equation}
Au:=\sum_{i,j}\partial_i (a_{i,j}(x)\partial_j u)+\sum_j a_j(x)\partial_j u, \:\: x\in G
\label{elliptic}
\end{equation}
with $\partial_i:=\partial_{x_i}$, can be expressed in the following sesquilinear form

\begin{equation}
\mathcal{A}[u](v):=\int_G \{\sum_{i,j}(a_{i,j}\partial_j u)\partial_i \overline{v}+\sum_j a_j\partial_j u\cdot \overline{v}\}, \:\: x\in G
\label{elliptic1}
\end{equation}
that will be related to uniformly elliptic equations by the following.

\begin{definition}Strongly elliptic sesquilinear form. A sesquilinear form like \eqref{elliptic1} is said to be strongly elliptic if there is a constant $c_0>0$ such that
\begin{equation}
Re\sum_{i,j}a_{i,j}(x)\xi_i \overline{\xi}_j \geq c_0 \sum_j |\xi_j|^2 \:\: ,x\in G,\xi\in\mathbb{C}^n.
\label{elliptic2}
\end{equation}
\end{definition}

If $\mathscr{A}(H^2(G))(G)$ is a particular C*-algebra on a Hilbert space $H^n(G)$, the sesquilinear form $\mathcal{L}$ of a uniformly elliptic operator $L\in \mathscr{A}(H^2(G))(G))$ can be expressed in the following way
\begin{equation}
\mathcal{L}[u](v)=\int_G\{\sum_{i,j}a_{i,j}\partial_iu\cdot\partial_j\overline{v}+\sum_j a_j\partial_ju\cdot\overline{v} \}
\label{seski}
\end{equation}
sesquilinear forms of this type are said to be

\begin{definition}Strongly Elliptic Form. A sesquilinear form like \eqref{seski} is said to be strongly elliptic if there is a $c_L>0$ such that
\begin{equation}
Re\sum_{i,j}a_{i,j}\xi_i\overline{\xi}_j\geq c_L\sum_j|\xi_j|^2, \:\:\: x\in G, \xi\in\mathbb{C}^n.
\end{equation}
\end{definition}

Now we will introduce some important definitions related to the well posedness of boundary value problems and that generalize the above results to Hilbert spaces $H^n(G)$ and particular C*-algebras $\mathscr{A}(H^n(G))$ of operators.

\begin{definition}
Coercivity of a form. The sesquilinear form $\mathcal{A}[\cdot](\cdot)$ of an operator $A\in \mathscr{A}(H^n(G))$ with $\mathscr{A}(H^n(G))$ a particular C*-algebra over a Hilbert space $H^n(G)$ is said to be $H^n(G)$-coercive if there is a $c_A>0$ such that
\begin{equation}
|\mathcal{A}[x](x)|\geq c_A\norm*{H^n(G)}{x}^2, x\in H^n(G)
\end{equation}
with $\norm*{H^n(G)}{x}:=\scalprod*{H^n(G)}{x}{x}^{1/2}$.
\end{definition}

\begin{definition} Ellipticity of a form. The sesquilinear form $\mathcal{A}[\cdot](\cdot)$ of an operator $A\in \mathscr{A}(H^n(G))$ with $\mathscr{A}(H^n(G))$ a particular C*-algebra on a Hilbert space $H^n(G)$ is said to be $H^n(G)$-elliptic if there is a $c_A>0$ such that
\begin{equation}
Re \mathcal{A}[x](x) \geq c_A\norm*{H^n(G)}{x}^2, x\in H^n(G).
\end{equation}
\end{definition}

\begin{definition} Bounded Operator. A symmetric $V$-elliptic operator $A\in\mathscr{A}(V)$ with $H^n(G)\geqslant V$ is said to be $V$-bounded if there is a constant $0<K_A<\infty$ such that $|\mathcal{A}[u](v)|\leq K_A \norm*{V}{u}\norm*{V}{v}$, $u,v\in V$.
\end{definition}

An operator $A\in\mathscr{A}(H^n(G))$ is said to be $V$-elliptic with $H^n(G)\geqslant V$ if its sesquilinear form $\mathcal{A}[\cdot](\cdot)$ is $V$-elliptic, in a similar way if a sesquilinear form $\mathcal{A}[\cdot](\cdot)$ is $V$-coercive then the related operator $A\in\mathscr{L}(H^n(G))$ is said to be $V$-coercive. If for a given Hilbert space $H^n(G)$ and a prescribed subspace $H^n(G)\geqslant V$ we have a symmetric $V$-elliptic operator $A\in \mathscr{A}(H^n(G))$ with $\mathscr{A}(H^n(G))$ a particular C*-algebra generated by a given operator $a \in \mathscr{L}(H^n(G))$, and if we have that $A=a^\dagger a$, it is not very difficult to see that
\begin{eqnarray}
\mathcal{A}[x](y)&=&\scalprod*{H^n(G)}{Ax}{y}\\
&=&\scalprod*{H^n(G)}{a^\dagger ax}{y}\\
&=&\scalprod*{H^n(G)}{ax}{ay}\\
&=&\scalprod*{a}{x}{y}\label{aprod}
\end{eqnarray}
the equation \eqref{aprod} is often called $a$-inner product and is useful to perform several estimates related with this kind of operators. A particularly important example of this type of inner products is the corresponding to the laplacian operator $\Delta:=\nabla\cdot\nabla=\sum_k \partial_k^2$ with $\nabla=:grad=[\partial_1,\cdots,\partial_n]$ and $\nabla\cdot=:div$ in particular function spaces, one of this examples will be presented now.

The importance of previous properties will be remarked in the following

\begin{lemma} Lax-Milgram Lemma. For a given operator $A\in \mathscr{A}(H)$ with $H$ a prescribed Hilbert space
if we have that $A$ is $V$-coercive and $V$-bounded, with $H \leqslant V$, then for each given $f\in V$, there exists a unique $u\in V$ such that
\begin{equation}
\mathcal{A}[u](v)=\scalprod*{V}{f}{v}, \:\: v\in V
\label{weak}
\end{equation}
Furthermore, if for a given $f$ the solution $u$ of \eqref{weak} is denoted by
\begin{equation}
u=\mathcal{G}f
\label{green}
\end{equation}
then $G\in\mathcal{L}(V)$.
\end{lemma}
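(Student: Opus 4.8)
The plan is to reduce the weak problem \eqref{weak} to a single fixed-point equation on $V$ and then invoke the Contraction Mapping Principle (Theorem \ref{contraction}). First I would apply the Riesz representation theorem to the form: for each fixed $u\in V$ the boundedness hypothesis $|\mathcal{A}[u](v)|\leq K_A\norm*{V}{u}\norm*{V}{v}$ shows that $v\mapsto\overline{\mathcal{A}[u](v)}$ is a bounded linear functional on $V$, so there is a unique $Su\in V$ with $\mathcal{A}[u](v)=\scalprod*{V}{Su}{v}$ for all $v\in V$. The map $S:V\to V$ thus defined is linear, and the same estimate gives $\norm*{V}{Su}\leq K_A\norm*{V}{u}$, so $S\in\mathcal{L}(V)$ with $\norm{}{S}\leq K_A$. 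Since the right-hand side of \eqref{weak} is already in Riesz form, the weak problem is equivalent to the operator equation $Su=f$.

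Next I would recast $Su=f$ as a fixed-point problem. For a relaxation parameter $\rho>0$ to be chosen, define $T_\rho:V\to V$ by $T_\rho(u):=u-\rho(Su-f)$, so that fixed points of $T_\rho$ are exactly the solutions of $Su=f$. Writing $w:=u_1-u_2$ and expanding the norm, I expect the identity
\begin{equation*}
\norm*{V}{T_\rho(u_1)-T_\rho(u_2)}^2=\norm*{V}{w}^2-2\rho\,Re\,\scalprod*{V}{Sw}{w}+\rho^2\norm*{V}{Sw}^2 .
\end{equation*}
Here the coercivity of the form supplies the lower bound $Re\,\scalprod*{V}{Sw}{w}=Re\,\mathcal{A}[w](w)\geq c_A\norm*{V}{w}^2$, while the operator bound gives $\norm*{V}{Sw}\leq K_A\norm*{V}{w}$, so the right side is at most $(1-2\rho c_A+\rho^2K_A^2)\norm*{V}{w}^2$. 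Choosing $0<\rho<2c_A/K_A^2$ makes the factor $K^2:=1-2\rho c_A+\rho^2K_A^2$ strictly less than $1$, so $T_\rho$ is a strict contraction. Theorem \ref{contraction} then produces a unique fixed point $u\in V$, the unique solution of \eqref{weak}, and I would set $u=\mathcal{G}f$ as in \eqref{green}.

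Finally I would check that $\mathcal{G}\in\mathcal{L}(V)$. Linearity follows from uniqueness together with the linearity of $S$ and of the dependence on $f$: if $u_i=\mathcal{G}f_i$ then $S(\alpha u_1+\beta u_2)=\alpha f_1+\beta f_2$, whence $\mathcal{G}(\alpha f_1+\beta f_2)=\alpha\mathcal{G}f_1+\beta\mathcal{G}f_2$. For the norm estimate I would test \eqref{weak} against $v=u$ and use coercivity, obtaining $c_A\norm*{V}{u}^2\leq|\mathcal{A}[u](u)|=|\scalprod*{V}{f}{u}|\leq\norm*{V}{f}\norm*{V}{u}$, so that $\norm*{V}{\mathcal{G}f}=\norm*{V}{u}\leq c_A^{-1}\norm*{V}{f}$ and hence $\norm{}{\mathcal{G}}\leq c_A^{-1}$.

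I expect the main obstacle to lie in the interplay between the two one-sided hypotheses: the contraction estimate genuinely requires the real-part (ellipticity-type) bound $Re\,\mathcal{A}[w](w)\geq c_A\norm*{V}{w}^2$ rather than only the modulus bound in the stated coercivity definition. I would therefore either read the coercivity hypothesis in its real-part form (i.e. as $V$-ellipticity) or split $S=S_R+iS_I$ into its symmetric and skew parts to recover the needed real-part estimate before calibrating $\rho$. The remaining work is purely in choosing $\rho$ so that $K<1$ while all constants stay independent of $u$ and $f$.
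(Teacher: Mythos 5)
Your argument is correct, but it is not the route the paper takes. The paper's proof uses the symmetry built into its hypotheses (recall that in this paper ``$V$-bounded'' is only defined for \emph{symmetric} $V$-elliptic operators): it declares $\scalprod*{V}{u}{v}:=\mathcal{A}[u](v)$ to be a new inner product on $V$, represents the functional $v\mapsto\scalprod*{V}{f}{v}$ in that energy inner product by a single application of the Riesz representation theorem, gets uniqueness from coercivity by testing with $v=u_1-u_2$, and then derives linearity of $\mathcal{G}$ from uniqueness. You instead run the classical non-symmetric Lax--Milgram argument: Riesz to build the operator $S$ with $\mathcal{A}[u](v)=\scalprod*{V}{Su}{v}$, then the Banach fixed point theorem applied to $T_\rho(u)=u-\rho(Su-f)$ with $0<\rho<2c_A/K_A^2$. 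What your approach buys is generality (no symmetry of the form is needed) plus a pleasant internal economy, since it reuses Theorem \ref{contraction} already proved in this paper, and it yields the explicit bound $\norm*{V}{\mathcal{G}}\leq c_A^{-1}$; what the paper's approach buys is brevity in the symmetric case. You are also right to flag the one real issue: the contraction estimate needs $\mathrm{Re}\,\mathcal{A}[w](w)\geq c_A\norm*{V}{w}^2$ ($V$-ellipticity), whereas the paper's stated coercivity is the modulus bound $|\mathcal{A}[w](w)|\geq c_A\norm*{V}{w}^2$. Your first remedy (read the hypothesis as ellipticity) is the standard one and is consistent with the paper's implicit symmetric setting; be aware, though, that your second remedy (splitting $S=S_R+iS_I$) does not by itself convert the modulus bound into a real-part bound --- for that one needs the convexity of the numerical range of $S$ to rotate $S$ by a fixed phase $e^{i\theta}$ so that $\mathrm{Re}\,\scalprod*{V}{e^{i\theta}Sw}{w}\geq c_A\norm*{V}{w}^2$, after which your calibration of $\rho$ goes through verbatim. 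With the ellipticity reading, your proof is complete and correct.
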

\begin{proof}
For any $u,v\in V$ and $f\in H$, with $V\leqslant H$, it can be seen that $\mathcal{A}[\cdot](\cdot)$ induces a scalar product in the following way
\begin{equation}
\scalprod*{V}{u}{v}:=\mathcal{A}[u](v), u,v \in V
\end{equation}
and also we can write $f(v):=\scalprod*{H}{f}{v}$ since $A$ is $V$-bounded and
\begin{equation}
\scalprod*{V}{u}{v}=f(v), v\in V
\end{equation}
by the Riesz representation theorem there exists $u\in V$ such that $\scalprod*{V}{u}{v}=f(v), \forall v\in V$, hence there exists a solution to \eqref{weak}, on the other hand we have that if there are two such solutions $u_1,u_1 \in V$ then
\begin{equation}
\mathcal{A}[u_1](v)=\scalprod*{H}{f}{v}=\mathcal{A}[u_1](v), v\in V
\end{equation}
wich implies that
\begin{equation}
\mathcal{A}[u_1-u_2](v)=0, v\in V
\end{equation}
since this is true for each $v\in V$ taking $v=u_1-u_2$ we get
\begin{equation}
0=\mathcal{A}[u_1-u_2](u_1-u_2)\geq c_A \norm*{V}{u_1-u_2}^2\geq 0
\end{equation}
wich implies that $\norm*{V}{u_1-u_2}=0$ hence $u_1=u_2$. Uniqueness allows us to define a map $\mathcal{G}\in \mathcal {L}(V)$ such that for a given $f\in V$ we have that $\mathcal{G}f$ is the unique element of $V$ that satisfies
\begin{equation}
\mathcal{A}[\mathcal{G}f](v)=\scalprod*{V}{f}{v}, v\in V
\end{equation}
clearly for $f_1,f_2\in V$ and scalars $a_1,a_2\in \mathbb{C}$ we will have that for every $v\in V$
\begin{eqnarray}
\mathcal{A}[a_1\mathcal{G}f_1+a_2\mathcal{G}f_2](v)&=&a_1\mathcal{A}[\mathcal{G}f_1](v)+a_2\mathcal{A}[\mathcal{G}f_2](v)\\
&=&a_1\mathcal{A}[f_1](v)+a_2\mathcal{A}[f_2](v)\\
&=&\mathcal{A}[a1f_1+a_2f_2](v)
\end{eqnarray}
Thus $\mathcal{G}(a_1f_1+a_2f_2)$ is solution of 
\begin{equation}
\mathcal{A}[y](v)=\scalprod*{V}{a_1f_1+a_2f_2}{v}=\mathcal{A}[a_1\mathcal{G}f_1+a_2\mathcal{G}f_2](v), v\in V
\end{equation}
hence $G\in\mathcal{L}(V)$.
\end{proof}

\begin{example}
If we take $A=-\Delta_{H^2_0(G)}:=-\Delta|_{H^2_0(G)}$ with $G:=[-1,1]^3$ and with $a:=\nabla$ and $a^\dagger:=\nabla\cdot$ and if we take the scalar product $\scalprod*{L^2(G)}{u}{v}$ to be defined by
\begin{equation}
\scalprod*{G}{u}{v}:=\int_G u\overline{v}
\end{equation}
by the first Green's formula we can obtain that
\begin{eqnarray}
\mathcal{A}[u](v)&:=&\scalprod*{L^2(G)}{Au}{v}\\
&=&-\int_G \Delta_{H^2_0(G)}u\overline{v}\\
&=&\int_G\nabla u \cdot \nabla \overline{v}\\
&=&\scalprod*{a}{u}{v}
\end{eqnarray}
now we can see that by the Poincare inequality there is a number $c(G)>0$ that depends on the Lebesgue measure of $G$ such that
\begin{eqnarray}
\norm*{L^2(G)}{u}&\leq&c(G)\int_G \nabla u \cdot\nabla \overline{u}\\
&=&c(G)\scalprod*{L^2(G)}{\nabla u}{\nabla u}\\
&=&c(G)|\mathcal{A}[u](u)|\\
&=&c(G)\norm*{a}{u}^2
\end{eqnarray}
wich implies that
\begin{eqnarray}
\norm*{H^1(G)}{u}^2&=&\norm*{L^2(G)}{u}^2+\norm*{L^2(G)}{\nabla u}^2\\
&\leq& c(G)^2\norm*{L^2(G)}{\nabla u}^2+\norm*{L^2(G)}{\nabla u}^2\\
&=&(1+c(G)^2)\norm*{L^2(G)}{\nabla u}^2\\
&=&(1+c(G)^2)\norm*{a}{u}^2\\
&=&(1+c(G)^2)|\mathcal{A}[u](u)|
\end{eqnarray}
wich means that there is a $c_A=(1+(c(G))^2)^{-1}>0$ for all $u\in H^2_0(G)$ such that $\mathcal{A}[u](u)\geq c_A\norm*{H^1(G)}{u}^2$ hence we have that $A$ is $H^1(G)$-coercive. On the other hand using the Schwarz inequality we can check that for $u,v\in H^2_0(G)$ we will have
\begin{eqnarray}
|A[u](v)|&=& \scalprod*{L^2(G)}{au}{av}\\
&\leq& \norm*{L^2(G)}{au} \norm*{L^2(G)}{av}\\
&\leq& \left[\norm*{L^2(G)}{au}^2+\norm*{L^2(G)}{u}^2\right]^{1/2} \left[\norm*{L^2(G)}{av}^2+\norm*{L^2(G)}{u}^2\right]^{1/2}\\
&=&\norm*{H^1(G)}{u}\norm*{H^1(G)}{v}
\end{eqnarray}
therefore $A$ is $H^1_0(G)$-bounded.
\end{example}

\subsection{Parabolic Operators}

A differential operator $P\in\mathscr{A}(H^n(G_\tau))$ defined by:
\begin{eqnarray}
Pu&:=&Au-\partial_t u\\
&=&\sum_{i,j}\partial_i (a_{i,j}(x,t)\partial_j u)+\sum_j a_j(x,t)\partial_j u -\partial_t u
\end{eqnarray}
with $(x,t)\in G_\tau= G\times[t_0,T]$, and where $A$ is $V$-elliptic for $ H^n(G_\tau) \leqslant V$, is said to be parabolic. In this section we will introduce an important condition trough the following.

\begin{definition} Dissipative Operator. An elliptic operator $A\in\mathscr{L}(H^n(G))$ is said to be dissipative if we have that
\begin{equation}
\text{Re} \mathcal{A}[x](x)\leq 0 \:\:, x\in \text{dom}(A).
\end{equation}
\end{definition}

The solution to a parabolic equation can be expressed using a general concept presented in the following definition.

\begin{definition} Contractive Semigroups. A set $\{s_t:t\geq 0\}$ of operators on $H^n(G)$ that satisfy the following conditions
\begin{description}
\item[S1] $\norm{}{s_t(x)}\leq\norm{}{x}$, $x\in\text{dom}(A), t\geq 0$
\item[S2] $s_{t+\tau}(\cdot)=s_t\circ s_\tau(\cdot),s_0=\mathbf{1}$, $t,\tau\geq 0$,
\item[S3] $s_{(\cdot)}(x)\in C([0,\infty)),H^n(G))$, $x\in H^n(G)$.
\end{description}
called contractive semigroup conditions.
\end{definition}

A semigroup $\{s_t:t\geq 0\}$ is said to be generated by an elliptic operator $A\in\mathscr{A}(H^n(G))$ if we have that
\begin{equation}
\lim_{t\to 0^+}h^{-1}(s_h(\cdot)-\mathbf{1})=A(\cdot)
\end{equation} 

If we have a dissipative elliptic operator $A\in\mathscr{A}(H^n(G_\tau))$ a solution $v\in H^n(G_\tau)$ to the Cauchy problem
\begin{equation}
\left\{
\begin{array}{l}
Av=\partial_t v, (x,t)\in G_\tau=G\times [0,T] \\
v(x,0)=v_0,x\in G\\
Bv=v_b, x\in \partial G
\end{array}
\right .
\end{equation} 
will satisfy
\begin{eqnarray}
D_t(\norm{}{v(x,t)})&=&2\text{Re}\scalprod*{H^n(G_\tau)}{\partial_tv(x,t)}{v(x,t)}\\
&=&2\text{Re}\mathcal{A}[v](v)\leq 0, \:\: t>0,
\end{eqnarray}
so it follows that $\norm{}{v(x,t)}\leq\norm{}{v(x,0)}$, $t\geq 0$. This shows that
\begin{equation}
\norm{}{s_t v_0}\leq\norm{}{v_0}, v_0\in dom(A), t\geq 0,
\end{equation}
so each $s_t$ is a contraction in the $H^n(G)$-norm.

The relations presented above can be sumarized in the following result.

\begin{theorem}
If $A\in\mathcal{L}(H^n(G))$ where $H^n(G)$ is a discretizable Hilbert space, and if $A$ is closed, densely defined and dissipative then it generates a contractive semigroup.
\end{theorem}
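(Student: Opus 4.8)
The plan is to establish this statement as an instance of the Hille--Yosida--Lumer--Phillips construction, carried out inside the present framework of particular C*-algebras and exploiting the Lax--Milgram Lemma proved above. Throughout write $H:=H^n(G)$, $\langle\cdot,\cdot\rangle$ for its inner product and $\|\cdot\|$ for the induced norm. First I would extract the fundamental resolvent estimate from dissipativity: for $\lambda>0$ and $x\in\operatorname{dom}(A)$ the definition gives
\[
\operatorname{Re}\langle(\lambda\mathbf{1}-A)x,x\rangle=\lambda\|x\|^{2}-\operatorname{Re}\mathcal{A}[x](x)\geq\lambda\|x\|^{2},
\]
and the Cauchy--Schwarz inequality then forces $\|(\lambda\mathbf{1}-A)x\|\geq\lambda\|x\|$. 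Consequently $\lambda\mathbf{1}-A$ is injective with a bounded inverse on its range, and since $A$ is closed that range is closed in $H$.

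The decisive step is to upgrade this to surjectivity, i.e. to show that $\lambda\mathbf{1}-A$ maps onto $H$ for every $\lambda>0$, so that the resolvent $R_\lambda:=(\lambda\mathbf{1}-A)^{-1}$ lies in $\mathcal{L}(H)$ with $\|R_\lambda\|\leq\lambda^{-1}$. I would obtain this from the Lax--Milgram Lemma applied to the sesquilinear form $\mathcal{B}_\lambda[u](v):=\lambda\langle u,v\rangle-\mathcal{A}[u](v)=\langle(\lambda\mathbf{1}-A)u,v\rangle$: the estimate above shows that $\mathcal{B}_\lambda$ is $H$-coercive with constant $\lambda$, while boundedness of the form supplies the continuity hypothesis, so for each $f\in H$ there is a unique $u$ with $\mathcal{B}_\lambda[u](v)=\langle f,v\rangle$ for all $v\in H$, that is $(\lambda\mathbf{1}-A)u=f$. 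This identifies $R_\lambda$ with the Green operator $\mathcal{G}$ furnished by that lemma.

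With the resolvents in hand I would introduce the Yosida approximants $A_\lambda:=\lambda AR_\lambda=\lambda^{2}R_\lambda-\lambda\mathbf{1}$, which are bounded, so the exponentials $s_t^{\lambda}:=e^{tA_\lambda}=\sum_{k\geq0}(tA_\lambda)^{k}/k!$ are well-defined norm-continuous semigroups. Writing $s_t^{\lambda}=e^{-\lambda t}e^{t\lambda^{2}R_\lambda}$ and using $\|R_\lambda\|\leq\lambda^{-1}$ yields $\|s_t^{\lambda}\|\leq e^{-\lambda t}e^{t\lambda}=1$, so each approximant satisfies the contraction condition S1; one also checks $A_\lambda x\to Ax$ for $x\in\operatorname{dom}(A)$ as $\lambda\to\infty$. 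To pass to the limit I would use the identity $\frac{d}{ds}\big(s_{t-s}^{\mu}s_s^{\lambda}x\big)=s_{t-s}^{\mu}(A_\lambda-A_\mu)s_s^{\lambda}x$ together with the contraction bounds to obtain $\|s_t^{\lambda}x-s_t^{\mu}x\|\leq t\,\|A_\lambda x-A_\mu x\|$ for $x\in\operatorname{dom}(A)$.

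This last estimate makes $\{s_t^{\lambda}x\}$ Cauchy uniformly on compact $t$-intervals, and density of $\operatorname{dom}(A)$ together with the uniform bound $\|s_t^{\lambda}\|\leq1$ extends the limit $s_t:=\lim_{\lambda\to\infty}s_t^{\lambda}$ to all of $H$. Conditions S1--S3 then pass to the limit, and differentiating $s_t^{\lambda}x$ at $t=0$ identifies $A$ as the generator in the sense demanded. The main obstacle is precisely the surjectivity of the second paragraph: dissipativity by itself does not force $\lambda\mathbf{1}-A$ to be onto (this is the maximality, or m-dissipativity, hypothesis), so the proof must genuinely invoke the Hilbert-space structure through Lax--Milgram, or equivalently verify that the adjoint $A^{\dagger}$ is dissipative so that the closed range is also dense and hence all of $H$.
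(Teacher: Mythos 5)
Your overall strategy --- the Lumer--Phillips/Hille--Yosida construction via resolvent estimates and Yosida approximants --- is the standard way to attack a statement of this kind, and it is considerably more substantial than what the paper offers: the paper gives no proof at all, only the preceding heuristic computation ($D_t\|v\|^2 = 2\,\mathrm{Re}\,\mathcal{A}[v](v)\le 0$, which presupposes that the solution $s_t v_0$ already exists) together with the subsequent remark that $e^{tA}=\sum_k (tA)^k/k!$ satisfies the semigroup axioms, which only makes sense when $A$ is genuinely bounded. So you are not reproducing the paper's argument; you are supplying one where none exists.

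That said, there is a genuine gap, and you have in fact put your finger on it yourself in your closing paragraph: the surjectivity of $\lambda\mathbf{1}-A$. Your appeal to the Lax--Milgram Lemma does not close it. That lemma, as proved in the paper, requires the form $\mathcal{B}_\lambda[u](v)=\lambda\langle u,v\rangle-\mathcal{A}[u](v)$ to be both coercive and bounded on the same space $V$. Dissipativity gives coercivity of $\mathcal{B}_\lambda$ in the $H$-norm, but for an unbounded $A$ the form is not $H$-bounded, and on a smaller form domain $V$ with a stronger norm the $H$-coercivity you derived does not upgrade to $V$-coercivity; so Lax--Milgram is not applicable without additional ellipticity hypotheses the theorem does not assume. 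Moreover the gap cannot be closed by any argument, because the statement as written is false for unbounded operators: take $A=-d/dx$ with $\mathrm{dom}(A)=\{u\in H^1(0,1):u(0)=u(1)=0\}\subset L^2(0,1)$, which is closed, densely defined and dissipative ($\mathrm{Re}\langle Au,u\rangle=0$), yet $\mathbf{1}-A$ is not surjective (solving $u+u'=f$ with both endpoint conditions forces $\int_0^1 e^s f(s)\,ds=0$), so $A$ generates no semigroup. The missing hypothesis is precisely the range condition (m-dissipativity), equivalently dissipativity of the adjoint, as you suspected. If instead one reads $A\in\mathcal{L}(H^n(G))$ as asserting boundedness, the theorem is true but your machinery is unnecessary: $e^{tA}$ converges absolutely and $\frac{d}{dt}\|e^{tA}x\|^2=2\,\mathrm{Re}\langle Ae^{tA}x,e^{tA}x\rangle\le 0$ gives condition S1 directly. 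Either way, the second paragraph of your proposal is the step that does not survive scrutiny.
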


If we represent parabolic equations by abstract evolution equations of the form
\begin{equation}
v'(t)=Av(t)+f(t)
\label{ev1}
\end{equation}
with $A\in\mathcal{L}(H^n(G))$, and where the Cauchy problem consists in finding a function $v\in C([0,T],H^n(G))\cap C^1([0,T),H^n(G))$ such that, for $t>0$, $v(t)\in dom(A)$ and \eqref{ev1} holds, and $v(0)=v_0$, where the initial value $v_0$ is prescribed.
It can be seen that the expression
\begin{equation}
e^{tA}:=\sum_{k=0}^\infty \frac{(tA)^k}{k!}
\end{equation}
satisfies the second and third contractive semigroup conditions, if we have that $A\in\mathcal{L}(H^n(G))$ is dissipative and $-A\in\mathcal{L}(H^n(G))$ is symmetric $V$-elliptic, then it can be verified that we will also have
\begin{equation}
\norm{}{e^{tA}v}\leq |\sum_{k\geq 0} \frac{(-t m)^k}{k!}|\norm{}{v} = e^{-mt}\norm{}{v}\leq \norm{}{v}, \:\: t\geq 0, v\in \text{dom}(A)
\end{equation}
with $m:=\inf_{x\neq 0}\norm{}{x}^{-2}|\mathcal{A}[x](x)|$, this clearly implies that $\{s_t:s_t:=e^{tA},t\geq0\}$ is a contractive semigroup. Using the last results we can obtain the following useful results.

\begin{corollary}\label{teu1}
If A is the generator of a contractive semigroup, then for each $u_0\in dom(A)$ there is a unique solution $u\in C^1([0,\infty),H^n(G))$ of \eqref{ev1} with $u(0)=u_0$.
\end{corollary}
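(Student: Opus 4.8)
The plan is to construct the solution explicitly from the contractive semigroup $\{s_t:t\geq 0\}$ generated by $A$, and then to establish uniqueness by a differentiation argument. First I would treat existence. Since $A$ generates $\{s_t\}$ (the limit $\lim_{h\to 0^+}h^{-1}(s_h-\mathbf{1})=A$ holds) and $u_0\in\mathrm{dom}(A)$, the orbit $t\mapsto s_t u_0$ is differentiable with $\frac{d}{dt}s_t u_0=A s_t u_0=s_t A u_0$, using the commutation of $A$ with $s_t$ on $\mathrm{dom}(A)$ and the semigroup law \textbf{S2}; this handles the homogeneous part. For the forcing term $f$ I would use the Duhamel (variation of parameters) representation
\begin{equation}
u(t):=s_t u_0+\int_0^t s_{t-\tau}f(\tau)\,d\tau,
\end{equation}
and then verify by differentiation that $u'(t)=A s_t u_0+f(t)+\int_0^t A s_{t-\tau}f(\tau)\,d\tau=Au(t)+f(t)$, so that \eqref{ev1} holds, while $u(0)=s_0 u_0=u_0$ follows from \textbf{S2}.

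Next I would confirm the regularity claim $u\in C^1([0,\infty),H^n(G))$. Continuity of $t\mapsto s_t u_0$ is condition \textbf{S3}, and continuity of the convolution term follows from the strong continuity of $s_t$ together with the contractive bound \textbf{S1}, which lets me dominate the integrand uniformly on compact time intervals. Differentiability and continuity of $u'$ then reduce to the corresponding properties of $t\mapsto s_t A u_0$ (again \textbf{S3}, now applied to $A u_0$) and of the integral term. I expect the main obstacle to be precisely this last point: the convolution $\int_0^t s_{t-\tau}f(\tau)\,d\tau$ is only guaranteed to be continuously differentiable and to take values in $\mathrm{dom}(A)$ once $f$ is assumed sufficiently regular (for instance $f\in C^1([0,\infty),H^n(G))$, or $f(\tau)\in\mathrm{dom}(A)$ with $\tau\mapsto A f(\tau)$ continuous); under such a hypothesis one integrates by parts to move the $t$-dependence out of the semigroup and differentiate legitimately. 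I would state this regularity requirement explicitly as the standing assumption on the forcing term.

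Finally I would prove uniqueness. Given two solutions $u_1,u_2$, set $w:=u_1-u_2$, so that $w'(t)=Aw(t)$ with $w(0)=0$. Fixing $t>0$ and defining $g(\tau):=s_{t-\tau}w(\tau)$ for $0\leq\tau\leq t$, the product rule gives $g'(\tau)=-s_{t-\tau}Aw(\tau)+s_{t-\tau}w'(\tau)=0$, so $g$ is constant on $[0,t]$; comparing $g(0)=s_t w(0)=0$ with $g(t)=s_0 w(t)=w(t)$ yields $w(t)=0$, and since $t$ was arbitrary, $u_1=u_2$. Alternatively, and more in the spirit of the dissipativity estimates used above, I could note that $D_t(\norm{}{w(t)}^2)=2\,\mathrm{Re}\,\scalprod*{H^n(G)}{Aw(t)}{w(t)}=2\,\mathrm{Re}\,\mathcal{A}[w](w)\leq 0$, so $\norm{}{w(t)}$ is nonincreasing and, starting from $\norm{}{w(0)}=0$, must vanish for all $t\geq 0$. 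Either route closes the argument.
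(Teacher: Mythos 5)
Your argument is correct in substance, but note that the paper itself offers no proof of this corollary: it is stated as an immediate consequence of the preceding semigroup discussion, and the intended reading is almost certainly the homogeneous case $f\equiv 0$, with $u(t)=s_tu_0$, since the forcing term is only introduced together with its regularity hypothesis $f\in C^1([0,\infty),H^n(G))$ in Theorem \ref{teu2}. Your Duhamel construction $u(t)=s_tu_0+\int_0^t s_{t-\tau}f(\tau)\,d\tau$ therefore reproduces, inside the corollary, essentially the content of the paper's proof of Theorem \ref{teu2}, which differentiates the same convolution via difference quotients and the identity $g'(t)=\int_0^t s_zf'(t-z)\,dz+s_tf(0)$; you are right that some hypothesis on $f$ (membership in $C^1$, or values in $\mathrm{dom}(A)$ with $Af$ continuous) is indispensable for that step, and flagging it explicitly repairs an ambiguity in the statement as written. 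Your write-up also supplies two things the paper omits entirely: the verification that $t\mapsto s_tu_0$ solves the homogeneous equation for $u_0\in\mathrm{dom}(A)$, and any uniqueness argument at all --- the paper's proof of Theorem \ref{teu2} only exhibits a solution. Your primary uniqueness route (constancy of $\tau\mapsto s_{t-\tau}w(\tau)$, which is legitimate because a solution $w$ takes values in $\mathrm{dom}(A)$) is the clean one; the alternative via $D_t\|w\|^2=2\,\mathrm{Re}\,\mathcal{A}[w](w)\leq 0$ quietly uses that the generator of a contractive semigroup is dissipative, which is true but is the converse of the implication the paper actually establishes, so it would need a one-line justification of its own.
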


\begin{theorem}\label{teu2}
If $A$ is the generator of a contractive semigroup, the for each $u_0\in dom(A)$ and each $f\in C^1([0,\infty),H^n(G))$ there is a unique $u\in C^1([0,\infty),H^n(G))$ such that $u(0)=u_0$, $u(t)\in dom(A)$ for $t\geq 0$, and
\begin{equation}
u'(t)=Au(t)+f(t), \:\:\: t\geq 0.
\label{eqt1}
\end{equation}
\end{theorem}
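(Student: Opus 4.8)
The plan is to solve \eqref{eqt1} by the variation-of-constants (Duhamel) representation and to read off both existence and uniqueness from the contraction semigroup $\{s_t\}$ generated by $A$. I would first dispose of uniqueness: if $u_1,u_2$ both solve \eqref{eqt1} with the same data, their difference $w:=u_1-u_2$ solves the homogeneous problem $w'(t)=Aw(t)$ with $w(0)=0$. Since $A$ is dissipative, the computation already carried out for parabolic operators gives $D_t\norm{}{w(t)}^2=2\text{Re}\,\mathcal{A}[w](w)\leq 0$, so $\norm{}{w(t)}$ is non-increasing and $\norm{}{w(0)}=0$ forces $w\equiv 0$; alternatively this is the uniqueness half of Corollary \ref{teu1} applied to the homogeneous equation.

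For existence I would propose the candidate
\begin{equation}
u(t):=s_t u_0+\int_0^t s_{t-\tau}f(\tau)\,d\tau=:s_t u_0+v(t),
\end{equation}
whose homogeneous part $s_t u_0$ already lies in $\text{dom}(A)$, is of class $C^1$, and solves $u'=Au$ with $u(0)=u_0$ by Corollary \ref{teu1}. It then remains to show that $v$ is continuously differentiable, that $v(t)\in\text{dom}(A)$ for every $t$, and that $v'(t)=Av(t)+f(t)$; the initial condition $v(0)=0$ is immediate from $s_0=\mathbf{1}$.

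The heart of the argument exploits the hypothesis $f\in C^1$. After the substitution $\sigma=t-\tau$ one rewrites $v(t)=\int_0^t s_\sigma f(t-\sigma)\,d\sigma$, and differentiating the upper limit together with the $t$-dependence of the integrand yields
\begin{equation}
v'(t)=s_t f(0)+\int_0^t s_{t-\tau}f'(\tau)\,d\tau,
\end{equation}
which is continuous in $t$, so $v\in C^1$. To identify $v'(t)$ with $Av(t)+f(t)$ I would examine the generator quotient directly: using the semigroup law in the form $s_h v(t)=\int_0^t s_{t+h-\tau}f(\tau)\,d\tau$ and splitting off the slice over $[t,t+h]$, one obtains
\begin{equation}
\frac{s_h-\mathbf{1}}{h}\,v(t)=\frac{v(t+h)-v(t)}{h}-\frac{1}{h}\int_t^{t+h}s_{t+h-\tau}f(\tau)\,d\tau.
\end{equation}
Letting $h\downarrow 0$, the first term on the right tends to $v'(t)$ and the second to $f(t)$ by continuity of $f$ and $s_0=\mathbf{1}$; hence the left-hand limit exists, which by the very definition of the generator means $v(t)\in\text{dom}(A)$ and $Av(t)=v'(t)-f(t)$. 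Adding the homogeneous part gives $u(t)\in\text{dom}(A)$ and $u'(t)=Au(t)+f(t)$, completing existence.

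The step I expect to be the main obstacle is precisely the simultaneous verification that $v(t)\in\text{dom}(A)$ and that the generator quotient $h^{-1}(s_h-\mathbf{1})v(t)$ converges; this is exactly where the $C^1$ regularity of $f$ is indispensable, since without it one can guarantee neither that $v$ is differentiable nor that the difference quotient has a limit. I should note that if, as the power-series definition of $e^{tA}$ suggests, $A$ is taken bounded so that $\text{dom}(A)=H^n(G)$, this obstacle dissolves: $A$ is then continuous, differentiation under the integral is immediate, and the identity $v'(t)=Av(t)+f(t)$ follows without the limiting argument above.
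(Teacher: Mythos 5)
Your proposal is correct and follows essentially the same route as the paper: the Duhamel formula $\int_0^t s_{t-\tau}f(\tau)\,d\tau$, differentiation after the substitution $z=t-\tau$ using $f\in C^1$, and the semigroup-law rearrangement of the difference quotient to conclude membership in $\text{dom}(A)$ and the identity $v'=Av+f$. You additionally spell out the uniqueness step and the homogeneous part $s_tu_0$, which the paper leaves implicit behind the phrase ``it suffices to show.''
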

\begin{proof}
It suffices to show that the function
\begin{equation}
g(t)=\int_0^t s_{t-\tau}f(\tau)d\tau, \:\:\: t\geq 0,\nonumber
\end{equation}
satisfies \eqref{eqt1} and to note that $g(0)=0$. Letting $z=t-\tau$ we have
\begin{eqnarray*}
h^{-1}(g(t+h)-g(t))&=&\int_0^t s_z(f(t+h-z)-f(t-z))h^{-1}dz\\
&&+ h^{-1}\int_t^{t+h}s_z f(t+h-z)dz
\end{eqnarray*}
so it follows that $g'(t)$ exists and
\begin{equation}
g'(t)=\int_0^t s_z f'(t-z)dz + s_tf(0).\nonumber
\end{equation}
Furthermore we have
\begin{eqnarray}
h^{-1}(g(t+h)-g(t))&=&h^{-1}\left\{\int_0^t+hs_{t+h-\tau}f(\tau)d\tau-\int_0^ts_{t-\tau} f(\tau)d\tau\right\}\nonumber\\
&=&(s_h-\mathbf{1})h^{-1}\int_0^ts_{t-\tau}f(\tau)d\tau \nonumber\\
&&+h^{-1}\int_t^{t+h}s_{t+h-\tau}f(\tau)d\tau. \label{eq2t1}
\end{eqnarray}
Since $g'(t)$ exists and since the last term in \eqref{eq2t1} has a limit as $h\to0^+$, it follows from \eqref{eq2t1} that
\begin{equation}
\int_0^ts_{t-\tau}f(\tau)d\tau\in dom(A)\nonumber
\end{equation}
and that $g$ satisfies \eqref{eqt1}.
\end{proof}

\begin{example}
Given the initial boundary value problem described by
\begin{equation}
\left\{
\begin{array}{l}
\partial_x^2 u=\partial_tu, \:\: x\in (0,1) \\
u(0,t)=u(1,t)=0\\
u(x,0)=u_0(x), u_0\in H^1_0([0,1])
\end{array}
\right .
\label{ibv1}
\end{equation}
we can verify that
\begin{eqnarray}
\scalprod*{L^2([0,1])}{\partial_x^2 u}{v}&=&\int_{[0,1]} \partial_x^2 u \cdot\overline{v}\\
&=&-\int_{[0,1]} \partial_x u \cdot \partial_x \overline{v} \\
&=&-\scalprod*{H^1_0([0,1])}{u}{v}
\end{eqnarray}
wich clearly implies that $\text{Re}\mathcal{A}[x](x)\leq 0$ hence $A=\partial_x^2$ is dissipative and therefore $\{s_t:s_t=e^{t\partial_x^2}, t\geq 0\}$ is a contractive semigroup over $L^2([0,1])$. It follows from T.\ref{teu1} that \eqref{ibv1}
has a unique solution $u\in H^1_0(G\times[0,\infty))$.
\end{example}

\section{Discretization Schemes}
In this section we deal with the discretization processes applied to operators involved in partial differential equations wich permits us to obtain the discrete particular C*-algebras $\mathscr{A}_{n,h}(H^m(G))$, with $H^m(G)$ a discretizable Hilbert space, see \cite{Vides}, first we will present the following definitions.

\begin{definition}
  Grid: For two given sets $G \subseteq \mathbb{R}^N$ and $\mathbb{G}=\{0,
  \cdots, M_m \} \subseteq \mathbb{Z}$, with $M_m$ a number that depends on a
  fixed number $m$, a fixed value $h \in \mathbb{R}^N$ and a bijection $f :
  \mathbb{Z} \rightarrow \mathbb{R}^N : \mathbb{G} \ni k \mapsto g \in
  \mathbb{R}^N$, the set $G_{m, h} =\{g_k \in G : g_k = f (k), k \in
  \mathbb{G}\}$ is called a grid in $G$ of size $h$ and length $M_m$ or
  simply a grid.
\end{definition}

For a given discretizable Hilbert space $H^m(G)$ one can define an operator
$P_{n, h} \in \mathcal{L} (H^m(G))$ called particular projector and defined in the
following way.

\subsection{Particular Representation of Operators}

\begin{definition}
  Particular Projector: An operator $P_{n, h} \in \mathcal{L}(H^m(G))$, with 
  $\\H^m(G)$ a discretizable hilbert space, that satisfies the relations:
  \begin{eqnarray}
    P^2_{n, h} = P_{n, h} &  &  \label{proj1}\\
    P_{n, h} x \conv{}{h \to 0^+} x&  & \\
    \norm*{}{ P_{n, h} -\mathbf{1}} \leq c_{(\cdot)}
    h^{\mu_m} &  &  \label{proj2}
  \end{eqnarray}
  will be called a perticular projector, in (\ref{proj2}) $\norm*{}{\cdot}$ represents any prescribed norm in $\mathcal{L}(H)$ and
  $\mu_m$ is a number that depends on $m$ that will be called projection order
  with respect to $\norm*{}{\cdot}$.
\end{definition}

A particular projector $P_{n,h}\in\mathcal{L}(H^m(G))$ can be factored in the form $P_{n,h}=p_{n,h}p_{m,h}^\dagger$, where $p_{n,h}^\dagger\in\mathcal{L}(H^m(G),(H^m_{n,h}(G))^\ast)$ and $p_{n,h}\in\mathcal{L}((H^m_{n,h}(G))^\ast,\\ H^m_{n,h}(G))$ are called decomposition and expansion factors respectively. The expansion factor is related to a given grid of $G$ by the expression $p^\dagger u:=\{c_k(u,G_{n,h})\}=\hat{u}$ and the operator $P_{n,h}$ is related to a basis $\mathscr{P}:=\{p_k\}$ through $P_{n,h}p_k=p_k, \forall p_k\in\mathscr{P}$. Using the factors described above we can represent any particular projector over a discretizable Hilbert space $X(G)$ using the following diagram.

\begin{equation}
\xymatrix{
X(G) \ar[r]^{P_{m,h}} \ar[rd]^{p^\dagger_{m,h}} &
X_{m,h}(G)  \\
&X^\ast_{m,h}(G) \ar[u]_{p_{m,h}}}
\end{equation}

Using particular projectors we can define the particular representation of the inner product map in $H^m(G)$ and also of a given operator $B\in\mathcal{L}(H^n(G))$ as follows.

\begin{definition} Inner product matrix.
For a given discretizable Hilbert space $H$ whose inner product is induced by
the inner product map $\mathcal{M \in \mathcal{L}} (H, H^{\ast})$ in the following way
\begin{equation}
  \left\langle x, y \right\rangle_H := \mathcal{M} [x] (y)
\end{equation}
one can define a particular representation given by
\begin{equation}
  \mathcal{M}_{m, h} [p^{\dagger}_{m, h} \cdot] (p^{\dagger}_{m, h} \cdot)
  := \mathcal{M} [P_{m, h} \cdot] (P_{m, h} \cdot)
\end{equation}
that will recive the name of inner product matrix.
\end{definition}

A very important property of inner product matrices will be presented in the following.

\begin{theorem}
  Every inner product matrix form is symmetric and positive definite (SPD).
\end{theorem}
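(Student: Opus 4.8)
The plan is to unfold the definition of the inner product matrix and to read off both properties from the structure of the underlying inner product together with the factorization $P_{m,h}=p_{m,h}p^\dagger_{m,h}$. For $x,y\in H$ set $\hat{x}:=p^\dagger_{m,h}x$ and $\hat{y}:=p^\dagger_{m,h}y$; since $P_{m,h}x=p_{m,h}p^\dagger_{m,h}x=p_{m,h}\hat{x}$ and likewise for $y$, the defining relation of the inner product matrix rewrites as
\begin{equation}
\mathcal{M}_{m,h}[\hat{x}](\hat{y})=\mathcal{M}[P_{m,h}x](P_{m,h}y)=\scalprod*{H}{p_{m,h}\hat{x}}{p_{m,h}\hat{y}}.\nonumber
\end{equation}
Thus $\mathcal{M}_{m,h}$ is exactly the Gram form of the coefficient vectors transported into $H$ by the expansion factor $p_{m,h}$, and both claims become statements about this Gram form.

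For symmetry I would invoke the conjugate symmetry of the inner product on $H$: from the displayed identity,
\begin{equation}
\mathcal{M}_{m,h}[\hat{x}](\hat{y})=\scalprod*{H}{p_{m,h}\hat{x}}{p_{m,h}\hat{y}}=\overline{\scalprod*{H}{p_{m,h}\hat{y}}{p_{m,h}\hat{x}}}=\overline{\mathcal{M}_{m,h}[\hat{y}](\hat{x})},\nonumber
\end{equation}
which is the symmetry (Hermitian in the complex case, symmetric in the real case) of the matrix representing $\mathcal{M}_{m,h}$ in the particular basis $\mathscr{P}=\{p_k\}$. Taking $\hat{x}=\hat{y}$ immediately gives $\mathcal{M}_{m,h}[\hat{x}](\hat{x})=\norm*{H}{p_{m,h}\hat{x}}^2\geq 0$, so the form is at least positive semidefinite.

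The remaining and only substantial point is strict definiteness: that $\mathcal{M}_{m,h}[\hat{x}](\hat{x})=0$ forces $\hat{x}=0$. Since $\norm*{H}{p_{m,h}\hat{x}}^2=0$ already gives $p_{m,h}\hat{x}=0$, this reduces to injectivity of the expansion factor $p_{m,h}$, equivalently linear independence of the particular basis $\mathscr{P}=\{p_k\}$. I would obtain it from the left-invertibility relation $p^\dagger_{m,h}p_{m,h}=\mathbf{1}$ on the coefficient space $X^\ast_{m,h}(G)$: the expansion factor sends a coefficient vector to its expansion $p_{m,h}\hat{x}=\sum_k\hat{x}_k p_k$, while the decomposition factor recovers the coefficients, so that $p^\dagger_{m,h}p_{m,h}\hat{x}=\hat{x}$; this is consistent with, and indeed forced by, the idempotency $P^2_{m,h}=P_{m,h}$ together with the fixing relation $P_{m,h}p_k=p_k$. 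Granting $p^\dagger_{m,h}p_{m,h}=\mathbf{1}$, from $p_{m,h}\hat{x}=0$ we get $\hat{x}=p^\dagger_{m,h}p_{m,h}\hat{x}=0$, which establishes definiteness. The step I expect to be the main obstacle is exactly this one: the data in the definition give $P^2_{m,h}=P_{m,h}$ directly but only yield $p^\dagger_{m,h}p_{m,h}=\mathbf{1}$ once one knows that $\mathscr{P}$ is linearly independent, so the care lies in justifying the left inverse of the expansion factor rather than merely the idempotent $p_{m,h}p^\dagger_{m,h}$.
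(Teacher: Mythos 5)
Your proposal is correct and follows the same basic strategy as the paper: unfold the definition of $\mathcal{M}_{m,h}$, use conjugate symmetry of the inner product on $H$ for the Hermitian property, and read off nonnegativity from $\mathcal{M}_{m,h}[\hat{x}](\hat{x})=\langle P_{m,h}x,P_{m,h}x\rangle_H\geq 0$. The one genuine divergence is in the definiteness step, and there your version is actually the more complete one. The paper computes the matrix entries on the particular basis, using $P_{m,h}p_k=p_k$ to get $(\mathcal{M}_{m,h})_{i,j}=\langle p_i,p_j\rangle_H=\overline{(\mathcal{M}_{m,h})}_{j,i}$, and then for positivity simply restricts attention to $0\neq x\in H\setminus \mathrm{Ker}\,P_{m,h}$ and asserts $\mathcal{M}_{m,h}[x](x)>0$ there; it never addresses whether the form is definite as a matrix on the coefficient space $X^\ast_{m,h}(G)$, which is what ``SPD'' and the subsequent invertibility corollary require. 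You correctly isolate that this reduces to injectivity of the expansion factor $p_{m,h}$, i.e.\ linear independence of $\mathscr{P}$, and you supply the left-inverse relation $p^\dagger_{m,h}p_{m,h}=\mathbf{1}$ to close it. That is the right repair; the only caveat is that, as you yourself flag, the paper's definitions hand you only the idempotent $P_{m,h}=p_{m,h}p^\dagger_{m,h}$ and $P_{m,h}p_k=p_k$, so linear independence of the basis (or the left-inverse identity) must be taken as part of what ``basis'' means for the particular projector rather than derived; stating that as an explicit hypothesis would make your argument airtight, and it is a hypothesis the paper itself is implicitly using when it divides by $\mathrm{Ker}\,P_{m,h}$.
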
  
  \begin{proof}
    It can be seen that for a discretizable Hilbert space $H$ and a given
    particular projector $P_{m, h}$ in $H$, with basis $\mathscr{P} =\{p_1,
    \cdots, p_{N_m} \}$, we will have that
    \begin{eqnarray*}
      \left(\mathcal{M}_{m,h}\right)_{i,j}&=& \mathcal{M}_{m,h}[p^{\dagger}_{m,h}p_i](p^{\dagger}_{m,h}p_j)\\
      &=& \left\langle P_{m, h} p_i, P_{m, h} p_j \right\rangle_H  \\
      &=& \left\langle p_i, p_j \right\rangle_H  \\
      &=& \overline{\left\langle p_j, p_i \right\rangle}_H  \\
      &=& \overline{\left\langle P_{m, h} p_j, P_{m, h} p_i \right\rangle}_H \\
      &=& \overline{\mathcal{M}_{m, h} [p^{\dagger}_{m, h} p_j] (p^{\dagger}_{m,
      h} p_i)} \\
      &=& \overline{(\mathcal{M}_{m, h})}_{j, i} 
    \end{eqnarray*}
    and this implies that $\mathcal{M}_{m, h} = \mathcal{M}^{\ast}_{m, h}$.
    Now since
    \[ 0 \leqslant \left\| x \right\|^2_{H_{m, h}} = \left\langle x, x
       \right\rangle_{H_{m, h}} =\mathcal{M}_{m, h} [p^{\dagger}_{m, h} x]
       (p^{\dagger}_{m, h} x) \]
    we will have that $\mathcal{M}_{m, h} [x] (x) > 0$ for each $0 \neq x \in
    H \backslash Ker \: P_{m, h}$.
  \end{proof}
  
This leads to the following
\begin{corollary}
Every inner product matrix form is invertible.
\end{corollary}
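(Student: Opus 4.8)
The plan is to read off invertibility as an immediate consequence of the symmetric positive definite structure just established, so only a short finite-dimensional argument is needed. First I would observe that, relative to the basis $\mathscr{P}=\{p_1,\dots,p_{N_m}\}$ of the range of $P_{m,h}$, the inner product matrix $\mathcal{M}_{m,h}$ is a genuine $N_m\times N_m$ matrix, and that for a square matrix invertibility is equivalent to having trivial kernel. Hence it suffices to show that $\mathcal{M}_{m,h}v=0$ forces $v=0$.

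The central step is to recognize the quadratic form of the matrix as a genuine Hilbert-space norm. Given $v=(v_1,\dots,v_{N_m})$ with $\mathcal{M}_{m,h}v=0$, set $x:=\sum_k v_k p_k$. Using the entrywise identity $(\mathcal{M}_{m,h})_{i,j}=\scalprod*{H}{p_i}{p_j}$ recorded in the proof of the previous theorem, together with sesquilinearity of the form, one checks that the scalar $v^{\ast}\mathcal{M}_{m,h}v$ coincides (as a real number) with $\scalprod*{H}{x}{x}=\norm*{H}{x}^2$, which vanishes because $v$ lies in the kernel. Thus $\norm*{H}{x}=0$, so $x=0$, and linear independence of the basis $\mathscr{P}$ then yields $v=0$.

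The only point requiring care, and the step I would flag as the main obstacle, is that the strict positivity established earlier was stated only on $H\setminus\operatorname{Ker}P_{m,h}$, so one must confirm that $x=\sum_k v_k p_k$ cannot lie in $\operatorname{Ker}P_{m,h}$ unless it is already zero. This is precisely where idempotency \eqref{proj1} enters: each basis vector satisfies $P_{m,h}p_k=p_k$, hence $P_{m,h}x=x$ and $x$ belongs to the range of $P_{m,h}$; for an idempotent the range and kernel meet only in $0$, so $x\in\operatorname{Ker}P_{m,h}$ forces $x=0$. With this observation the inequality $\mathcal{M}_{m,h}[x](x)>0$ applies to every nonzero $x$ in the span of $\mathscr{P}$, closing the argument. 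I would also note the even shorter alternative route: since $\mathcal{M}_{m,h}=\mathcal{M}_{m,h}^{\ast}$ it is unitarily diagonalizable with real eigenvalues, positive definiteness makes each eigenvalue strictly positive, and therefore $\det\mathcal{M}_{m,h}>0$, giving invertibility at once.
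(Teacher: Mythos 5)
Your argument is correct and is exactly the route the paper intends: the corollary is stated without proof as an immediate consequence of the preceding SPD theorem, and you have simply made explicit the standard linear-algebra step (trivial kernel via $v^{\ast}\mathcal{M}_{m,h}v=\norm*{H}{x}^2$, plus the observation that $x=\sum_k v_k p_k$ lies in the range of the idempotent $P_{m,h}$, so the restriction to $H\setminus\operatorname{Ker}P_{m,h}$ is harmless). Nothing further is needed.
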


Properties of inner product matrices permit us to express the inner product and corresponding norm in $H_{n,h}$ for any $x,y\in H$ by
\begin{equation}
\scalprod*{H_{n,h}}{x}{y}=\scalprod*{2}{m_{n,h}x}{m_{n,h}y}=(m_{n,h}y)^\ast m_{n,h}x=y^\ast\mathcal{M}_{n,h}x
\end{equation}
and
\begin{equation}
\norm*{H_{n,h}}{x}=\scalprod*{H_{n,h}}{x}{x}^{1/2}=\scalprod*{2}{m_{n,h}x}{m_{n,h}y}=\norm*{2}{m_{m,h}x}
\end{equation}
respectively, with $m_{n,h}$ the formal square root of $mathcal{M}_{n,h}$.

\begin{definition}
  Particular Representation of an Operator. For a given operator $B \in
  \mathcal{L} (X, Y)$ being $X, Y$ discretizable Hilbert spaces and being
  $X_{m, h}, Y_{m, h}$ the subspaces relative to the particular projectors
  $P_{m, h} \in \mathcal{L} (X), Q_{m, h} \in \mathcal{L} (Y)$, the operator
  $B_{m, h} \in \mathcal{L} (X^{\ast}_{m, h}, Y^{\ast}_{m, h})$ given by
  \begin{equation}
    B_{m, h} := q^{\dagger}_{m, h} Bp_{m, h} \label{partop}
  \end{equation}
  will be called particular representation of $B$.
\end{definition}

Once we have computed the particular representation of a given operator $B \in
\mathcal{L} (X, Y)$ over a discretizable Hilbert spaces $X, Y$, in prescribed
subspaces $X_{m, h} \leqslant X, Y_{m, h} \leqslant Y$ determined by a
particular projectors $P_{m, h}, Q_{m, h}$, we will define the approximation
order of a particular representation as follows.

\begin{definition}
  Approximation order of a particular representation. We say that the
  particular representation $B_{m, h} \in \mathcal{L} (X^{\ast}_{m, h},
  Y^{\ast}_{m, h})$ of an operator $B \in \mathcal{L} (X, Y)$ is of order
  $\nu_m$ (with $\nu_m$ a value that depends of the prescribed number m) with
  respect to a given norm $\norm*{}{\cdot}$ in $Y$ if for each
  $x \in X$ there exists $c$ that does not depend on $h$ such that:
  \begin{equation}
    \norm*{}{ B_{m, h} p^{\dagger}_{m, h} x - q^{\dagger}_{m, h} Bx} \leq c_x h^{\nu_m}
  \end{equation}
\end{definition}

Based on the definition presented above and the properties of the inner product matrices if we denote by $\mathcal{M}_{m,h}(X)$ and $\mathcal{M}_{m,h}(Y)$ the particular representation of the inner product matrices of the discretizable Hilbert spaces $X$ and $Y$ respectively we can obtain an important definition for $\norm*{\mathcal{L}(X_{m,h},Y_{m,h})}{\cdot}$ in the following way.

\begin{eqnarray}
\norm*{\mathcal{L}(X_{m,h},Y_{m,h})}{A}&:=&\sup_{v\neq 0} \frac{\norm*{Y_{m,h}}{Av}}{\norm*{X_{m,h}}{v}}\\
&=&\sup_{v\neq 0} \frac{\norm*{2}{m_{m,h}(Y)Av}}{\norm*{2}{m_{m,h}(X)v}}\\
&=&\sup_{z=m_{m,h}(X)v\neq 0} \frac{\norm*{2}{m_{m,h}(Y)Am_{m,h}(X)^{-1}z}}{\norm*{2}{z}}\\
&=&\norm*{2}{m_{m,h}(Y)Am_{m,h}(X)^{-1}}
\end{eqnarray}
wich permits us to mimic important properties of operator algebras in $\mathcal{L}(X)$ wich is really important when we deal wich C*-algebras of operators in discretizable Hilbert spaces. When there is no confussion we will just refer to the norm in the corresponding Banach space (discrete or not) simply by $\norm{}{\cdot}$. As a consequence of the Gershgorin theorem it is not difficult to derive the following.

\begin{corollary}\label{gersh}
%\begin{equation}
$\norm{}{A}\leq\norm*{\infty}{m_{n,h}Am_{n,h}^{-1}}, \forall A\in\mathscr{A}_{n,h}(G)$.
%\end{equation}
\end{corollary}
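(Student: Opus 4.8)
The plan is to reduce the statement to a spectral estimate for a single matrix and then invoke the Gershgorin circle theorem. First I would recall the norm computation carried out immediately before the corollary, which identifies the operator norm on the discrete space $H_{n,h}$ with the spectral (Euclidean) matrix norm: for $A\in\mathscr{A}_{n,h}(G)$ one has $\norm{}{A}=\norm*{2}{m_{n,h}Am_{n,h}^{-1}}$. Writing $B:=m_{n,h}Am_{n,h}^{-1}$, the task becomes to bound the spectral norm $\norm*{2}{B}$ by the maximal absolute row sum $\norm*{\infty}{B}$.

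Next I would pass from the spectral norm to the spectral radius $\rho(B)$. The structural fact I would use is that the similarity $A\mapsto m_{n,h}Am_{n,h}^{-1}$ is a $*$-isomorphism of the particular C*-algebra $\mathscr{A}_{n,h}(G)$, equipped with the $H_{n,h}$-adjoint, onto a matrix C*-algebra equipped with the ordinary conjugate transpose; this is because $m_{n,h}$ is the positive square root of the SPD inner product matrix $\mathcal{M}_{n,h}$, so that the $H_{n,h}$-adjoint $A^{\dagger}=\mathcal{M}_{n,h}^{-1}A^{\ast}\mathcal{M}_{n,h}$ is carried precisely to $B^{\ast}$. Consequently $A$ normal forces $B$ to be a normal matrix, and for a normal matrix the spectral norm equals the spectral radius, $\norm*{2}{B}=\rho(B)$. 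Equivalently one may argue intrinsically: the C*-identity in $\mathscr{A}_{n,h}(G)$ gives $\norm{}{A}=\rho(A)$ for normal $A$, while similar matrices share their spectrum, so $\rho(A)=\rho(B)$.

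Finally I would apply Gershgorin's theorem to $B$: every eigenvalue $\lambda$ of $B$ lies in some disc centred at $B_{ii}$ with radius $\sum_{j\neq i}|B_{ij}|$, whence $|\lambda|\leq|B_{ii}|+\sum_{j\neq i}|B_{ij}|=\sum_{j}|B_{ij}|\leq\max_{i}\sum_{j}|B_{ij}|=\norm*{\infty}{B}$. Taking the supremum over the spectrum yields $\rho(B)\leq\norm*{\infty}{B}$, and combining with the previous step gives $\norm{}{A}=\rho(B)\leq\norm*{\infty}{m_{n,h}Am_{n,h}^{-1}}$, which is the asserted estimate.

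I expect the main obstacle to be the passage from $\norm{}{A}=\norm*{2}{B}$ to $\rho(B)$, since the identity $\norm*{2}{B}=\rho(B)$ is valid only when $B$ is normal, and a genuinely non-normal $B$ may satisfy $\norm*{2}{B}>\norm*{\infty}{B}$. The estimate must therefore be read over the normal elements of $\mathscr{A}_{n,h}(G)$, and its scope is guaranteed by the hypothesis underlying the construction: the particular C*-algebra is generated by $\mathbf{1}$ together with a single operator $a$, so that whenever $a$ is normal the algebra $\mathscr{A}_{n,h}(G)$ is commutative and \emph{every} element is normal. Once normality is secured, the remaining ingredients, namely similarity-invariance of the spectrum and the Gershgorin bound, are entirely routine.
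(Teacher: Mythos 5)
Your route is exactly the one the paper intends: the paper supplies no argument for this corollary beyond the phrase ``as a consequence of the Gershgorin theorem,'' relying implicitly on the identity $\norm{}{A}=\norm*{2}{m_{n,h}Am_{n,h}^{-1}}$ established in the displayed computation immediately above it, so reducing the claim to a Gershgorin bound for $B:=m_{n,h}Am_{n,h}^{-1}$ is precisely the intended proof. Your verification that the similarity intertwines the $H_{n,h}$-adjoint with the ordinary conjugate transpose (via $A^{\dagger}=\mathcal{M}_{n,h}^{-1}A^{\ast}\mathcal{M}_{n,h}$ and $\mathcal{M}_{n,h}=m_{n,h}^{2}$) is correct and is more than the paper records.

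The obstacle you flag is genuine, and it is a gap in the corollary as stated rather than in your attempt. Gershgorin controls only the spectral radius, $\rho(B)\leq\norm*{\infty}{B}$, and the passage to $\norm*{2}{B}$ requires $B$ to be normal; for a non-normal $B$ the inequality $\norm*{2}{B}\leq\norm*{\infty}{B}$ can fail outright (for the $2\times 2$ matrix with both rows equal to $(1,0)$ one has $\norm*{\infty}{B}=1$ while $\norm*{2}{B}=\sqrt{2}$). So the unrestricted quantifier ``$\forall A\in\mathscr{A}_{n,h}(G)$'' is not justified by Gershgorin alone. Your proposed repair --- the algebra is singly generated, so normality of $a$ forces commutativity and hence normality of every element --- is sensible but rests on a hypothesis the paper never states: the definition of a discrete particular C*-algebra allows finitely many generators $\{\mathbf{1},a,b,\cdots\}$, and nowhere is $a$ assumed normal. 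What holds unconditionally is $\rho(A)=\rho(B)\leq\norm*{\infty}{m_{n,h}Am_{n,h}^{-1}}$; to recover the stated bound on the operator norm one must either add normality, as you do, or weaken the left-hand side to the spectral radius.
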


\begin{definition}
Discrete Particular C*-algebra of Operators. If a particular C*-algebra $\mathscr{A}$ is generated by finitely many operators say $\{\mathbf{1},a,b,\cdots\}\subset\mathcal{L}(H^n(G))$, then the C*-algebra $\mathscr{A}_{n,h}$ defined by $\mathscr{A}_{n,h}:=\{\hat{b}\in\mathcal{L}(H_{n,h}^m(G)): \hat{b}:=p^\dagger_{n,h}bp_{n,h}, b\in\mathscr{A}(H^m(G))\}$ will be called discrete particular C*-algebra or simply particular C*-algebra generated by $\{\mathbf{1},a_{n,h},b_{m,h},\cdots\}$.
\end{definition}

\subsection{Spectral Estimates}

We will begin this section defining a very useful condition described as follows.
\begin{definition} Stability. A sequence $\{A_{n,h}\}_{0<h\leq h_0}$, $n\geq n_0$ of $m\times m$ matrices $A_{n,h}$ is said to be $h$-stable if the matrices $A_{n,h}$ are invertible for any fixed $n\geq n_0\in \mathbb{Z}^+$ and all sufficiently small $h$, say $0<h\leq h_1$ for any fixed $n\in\mathbb{N}$, and if
\begin{equation}
\sup_{\mathbb{R}^+\ni h\leq h_0}\norm{}{A_{n,h}^{-1}}<\infty.
\end{equation}
\end{definition}

If $B$ is not invertible we put $\norm{}{B^{-1}}=\infty$. With this convention, we can say that the sequence $\{A_{n,h}\}_{n\geq n_0,h\leq h_0}$ is $h$-stable if and only if
\begin{equation}
\limsup_{h\to 0^+} \norm{}{A_{n,h}^{-1}}<\infty, \text{  for fixed } n\in\mathbb{N}.
\end{equation}

Using the expressiones presented above we can obtain the following result.

\begin{theorem}\label{stability1}
If the sequence $\{A_{n,h}\}_{n\geq n_0,0<h\leq h_0}$ of $m\times m$ matrices $A_{n,h}$ is stable, then $A$ is necessarily invertible.
\end{theorem}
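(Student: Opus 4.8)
The plan is to argue by contradiction, exploiting the $\limsup$ characterization of stability stated just above together with the consistency that is built into the notion of a particular representation. Throughout I fix $n\geq n_0$ and regard $A_{n,h}=p^{\dagger}_{n,h}Ap_{n,h}$ as the particular representation of the operator $A\in\mathcal{L}(H^m(G))$, whose approximation order $\nu_m$ supplies, for each $x$, a constant $c_x$ independent of $h$ with $\norm{}{A_{n,h}p^{\dagger}_{n,h}x-p^{\dagger}_{n,h}Ax}\leq c_x h^{\nu_m}$. The guiding heuristic is that $\norm{}{A_{n,h}^{-1}}^{-1}$ is the smallest singular value of $A_{n,h}$, so stability keeps this quantity bounded away from $0$ uniformly in $h$; the task is to transfer that uniform lower bound from the discrete matrices to $A$ itself. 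In the genuinely finite-dimensional case this is nothing more than continuity of the smallest singular value under the convergence $A_{n,h}\to A$, and the argument below is the operator-theoretic version of that fact.

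First I would show that $A$ is bounded below. Suppose it is not: then there is a unit vector $x$ (or, if $A$ is injective while $\inf_{\norm{}{x}=1}\norm{}{Ax}=0$, a sequence of unit vectors) with $\norm{}{Ax}$ arbitrarily small. Pushing $x$ through the decomposition factor and using the defining convergence $P_{n,h}x\conv{}{h\to0^+}x$ of a particular projector gives $\norm{}{p^{\dagger}_{n,h}x}\to\norm{}{x}=1$, while the consistency estimate yields $\norm{}{A_{n,h}p^{\dagger}_{n,h}x}\leq\norm{}{p^{\dagger}_{n,h}Ax}+c_x h^{\nu_m}$, which is small once $\norm{}{Ax}$ and $h$ are small. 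Since each $A_{n,h}$ is invertible, $\norm{}{A_{n,h}^{-1}}\geq\norm{}{p^{\dagger}_{n,h}x}\,\norm{}{A_{n,h}p^{\dagger}_{n,h}x}^{-1}$, and the right-hand side can be forced arbitrarily large, contradicting $\limsup_{h\to0^+}\norm{}{A_{n,h}^{-1}}<\infty$. Hence there is $c>0$ with $\norm{}{Ax}\geq c\norm{}{x}$ for all $x$; in particular $A$ is injective and has closed range.

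To finish I would obtain the same lower bound for the adjoint $A^{\dagger}$. Because $A$ lives in a unital C*-algebra, which is closed under the involution, $A^{\dagger}\in\mathscr{A}(H^m(G))$, and its particular representation is $(A^{\dagger})_{n,h}=p^{\dagger}_{n,h}A^{\dagger}p_{n,h}=(A_{n,h})^{\dagger}$; consequently $((A_{n,h})^{\dagger})^{-1}=(A_{n,h}^{-1})^{\dagger}$, so $\norm{}{((A_{n,h})^{\dagger})^{-1}}=\norm{}{A_{n,h}^{-1}}$ and the adjoint family is stable as well. Repeating the previous paragraph with $A^{\dagger}$ in place of $A$ shows $A^{\dagger}$ is bounded below, so $A$ has dense range. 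Dense range together with closed range gives surjectivity, and with injectivity this yields that $A$ is invertible, with $\norm{}{A^{-1}}\leq\limsup_{h\to0^+}\norm{}{A_{n,h}^{-1}}$.

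The main obstacle is precisely the surjectivity half, since bounded-below-ness alone does not force invertibility in infinite dimensions. One must either run the adjoint route above, which requires checking that the involution of the ambient C*-algebra is compatible with the particular representation, i.e. that $(A_{n,h})^{\dagger}$ genuinely represents $A^{\dagger}$ and inherits stability, or construct a preimage directly by setting $x_{n,h}=p_{n,h}A_{n,h}^{-1}p^{\dagger}_{n,h}y$, using stability to bound $\{x_{n,h}\}$ and extracting a weak limit solving $Ax=y$. The delicate point common to both routes is passing to the limit through the factors $p_{n,h}$ and $p^{\dagger}_{n,h}$ without losing control, which is exactly where the defining convergence and norm estimates of particular projectors, \eqref{proj1}--\eqref{proj2}, have to be invoked.
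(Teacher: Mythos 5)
Your argument is correct and follows essentially the same route as the paper: both proofs use the uniform bound on $\norm{}{A_{n,h}^{-1}}$ together with the convergence $P_{n,h}x\to x$ to show that $A$ and $A^{\ast}$ are bounded below, and then conclude invertibility from injectivity, closed range, and dense range. The paper states this directly as $\norm{}{x}\leq M\norm{}{Ax}$ and $\norm{}{x}\leq M\norm{}{A^{\ast}x}$ by passing to the limit $h\to 0^{+}$, whereas you phrase the first half as a contradiction, but the content is identical.
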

\begin{proof}
Let $\norm{}{A_{n,h}^{-1}}\leq M$ for $0<h\leq h_0$ and $n\in\mathbb{N}$ fixed. Then if $x\in H$ with $H$ a discretizable Hilbert space and $0<h\leq h_0$,
\begin{eqnarray*}
\norm{}{P_{n,h}x}&=&\norm{}{A_{n,h}^{-1}A_{n,h}x}\leq M\norm{}{A_{n,h}x}=M\norm{}{P_{n,h}AP_{n,h}x},\\
\norm{}{P_{n,h}x}&=&\norm{}{(A^\ast_{n,h})^{-1}A^\ast_{n,h}x}\leq M\norm{}{A^\ast_{n,h}x}=M\norm{}{P_{n,h}A^\ast P_{n,h}x},
\end{eqnarray*}
and passing to the limit $h\to 0^+$, we get
\begin{equation}
\norm{}{x}\leq M\norm{}{Ax}, \:\:\: \norm{}{x}\leq M\norm{}{A^\ast x}
\end{equation}
for every $x\in H$. This shows that $A$ is invertible.
\end{proof}

Now we will present a useful definition and some important related results and diagrams.

\begin{definition}Exactly Factorizable Operator. 
An operator $A\in \mathscr{A}(X(G))$ with $\mathscr{A}(X(G))$ a C*-algebra of operators over a discretizable Hilbert space 
$X(G)$, that can be factored in the form $A=a^\dagger a$, with $a\in \mathscr{A}(X(G))$ is said to be exactly factorizable in $\mathscr{A}(X(G))$ and will satisfy the following estimates.
\end{definition}

Exactly factorizable operators can be represented by the following diagram.
\begin{equation}
\xymatrix{
X(G) \ar[r]^A \ar[rd]^{a^\dagger} &
X''(G)  \\
&X'(G) \ar[u]^a}
\label{diag2}
\end{equation}
If $X(G)$ is a discretizable Hilbert space, a particular representation $a_{m,h}\in X^\ast_{m,h}(G)$  of $a\in\mathscr{A}(X(G))$ with respect to a particular projection $P_{m,h}\in\mathcal{P}(X(G))$, being $\mathcal{P}(X(G))$ the space of all projections over $X(G)$, can be expressed using the following diagram.
\begin{equation}
\xymatrix
{   X(G) \ar[r]^{p^\dagger_{m,h}} \ar[rd]^{P_{m,h}} & X^\ast_{m,h}(G) \ar[d]^{p_{m,h}} \ar[r]^{a_{m,h}} & X'^\ast_{m,h}(G) \ar[r]^{p_{m,h}} & X'_{m,h}(G) \\
         & X_{m,h}(G) \ar[r]^{a} & X'_{m,h}(G) \ar[u]_{p_{m,h}^\dagger} \ar[ur]_{P_{m,h}}
}
\label{diag3}
\end{equation}

Using diagrams \eqref{diag2} and \eqref{diag3} we can express the particular factorization of an exactly factorizable operator $A\in\mathscr{A}(X(G))$ by the following diagram.
\begin{equation}
\xymatrix{
{X'}_{m,h}^\ast(G) \ar[d]_{\mathcal{M}_{m,h}}  &X_{m,h}^\ast(G)\ar[l]_{a_{m,h}^\ast}\ar[d]_{\mathcal{A}_{m,h}}\ar[dr]^{A_{m,h}}\ar[r]^{a_{m,h}^\dagger} & {X'}^\ast_{m,h}(G) \ar[d]^{a_{m,h}}\\
{X'}_{m,h}^\ast(G) \ar[r]_{a_{m,h}} &{X''}^\ast_{m,h}(G) \ar[r]_{\mathcal{M}_{m,h}^{-1}} &{X''}^\ast_{m,h}(G)}
\label{diag4}
\end{equation}
Now we will present the following results applicable to exactly factorizable operators.

\begin{lemma}
Let $A\in \mathscr{A}(X(G))$ be exactly factorizable in $\mathscr{A}(X(G))$. Then
\begin{equation}
|\mathcal{A}[x](x)|\geq \varepsilon \norm{}{x}^2.
\end{equation}
\end{lemma}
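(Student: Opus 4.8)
The plan is to reduce the claim to the $a$-inner product identity already recorded in \eqref{aprod} and then to extract a positive lower bound from the invertibility of the factor $a$. First I would invoke exact factorizability to write $A=a^\dagger a$ with $a\in\mathscr{A}(X(G))$, and expand the sesquilinear form exactly as in the derivation preceding \eqref{aprod}:
\[
\mathcal{A}[x](x)=\scalprod*{X(G)}{a^\dagger ax}{x}=\scalprod*{X(G)}{ax}{ax}=\norm{}{ax}^2.
\]
Since this quantity is real and nonnegative, we have $|\mathcal{A}[x](x)|=\norm{}{ax}^2$, so the claimed estimate is equivalent to the lower bound $\norm{}{ax}\geq\sqrt{\varepsilon}\,\norm{}{x}$; that is, the assertion is precisely that $a$ is bounded below.

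The second step is to produce such a bound from the C*-algebra structure. Here $A=a^\dagger a$ is a positive element of $\mathscr{A}(X(G))$, and the hypothesis underlying the notion of an exactly factorizable (elliptic) operator is that $a$ is invertible in $\mathscr{A}(X(G))$, equivalently that $A$ is invertible. Granting this, I would argue in either of two equivalent ways. Directly, for every $x$,
\[
\norm{}{x}=\norm{}{a^{-1}ax}\leq\norm{}{a^{-1}}\,\norm{}{ax},
\]
so that $\norm{}{ax}^2\geq\norm{}{a^{-1}}^{-2}\norm{}{x}^2$ and one may take $\varepsilon:=\norm{}{a^{-1}}^{-2}>0$. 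Spectrally, $A$ is positive and invertible, hence its spectrum satisfies $\sigma(A)\subset[\varepsilon,\infty)$ with $\varepsilon=\norm{}{A^{-1}}^{-1}>0$; the continuous functional calculus then yields $A\geq\varepsilon\mathbf{1}$ and therefore $\scalprod*{X(G)}{Ax}{x}\geq\varepsilon\norm{}{x}^2$. The two constants agree, since the C*-identity gives $\norm{}{A^{-1}}=\norm{}{a^{-1}}^2$.

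The main obstacle is exactly the justification that $\varepsilon>0$ exists, i.e.\ that $a$ (equivalently $A$) is bounded below: the identity $|\mathcal{A}[x](x)|=\norm{}{ax}^2$ is automatic, but the inequality fails whenever $a$ has nontrivial kernel or spectrum accumulating at $0$. I would therefore make the invertibility of the factor $a$ the crucial hypothesis, reading it off either from the definition of exactly factorizable operator in the ambient C*-algebra or from an accompanying ellipticity assumption, and then pin down $\varepsilon$ as $\norm{}{a^{-1}}^{-2}$ (equivalently $\norm{}{A^{-1}}^{-1}$). In the discretized setting this is the natural place to appeal to the stability notion of the previous subsection, since $h$-stability of $\{A_{n,h}\}$ furnishes precisely a uniform lower bound of this kind.
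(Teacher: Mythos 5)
Your proof takes the same route as the paper's for its one substantive computation: both expand $\mathcal{A}[x](x)=\scalprod{}{a^\dagger ax}{x}=\scalprod{}{ax}{ax}=\norm{}{ax}^2$ from the factorization $A=a^\dagger a$, reducing the lemma to the claim that $a$ is bounded below. The two arguments part ways at exactly the point you flag as the main obstacle. The paper's proof asserts that because $|\mathcal{A}[x](x)|$ is ``(SPD)'' there exists $\delta>0$ with $\norm{}{ax}\geq\delta\norm{}{x}$, and sets $\varepsilon=\delta^2$; no justification is offered, and, as you correctly observe, positive (semi)definiteness alone does not yield a uniform lower bound in infinite dimensions---the stated definition of an exactly factorizable operator ($A=a^\dagger a$ with $a\in\mathscr{A}(X(G))$) does not rule out $a$ having nontrivial kernel or spectrum accumulating at $0$. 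You repair this by importing invertibility of $a$ (equivalently of $A$) as an explicit hypothesis and extracting the concrete constant $\varepsilon=\norm{}{a^{-1}}^{-2}=\norm{}{A^{-1}}^{-1}$, either directly or by functional calculus. So yours is not a genuinely different decomposition but a more careful version of the same argument: it makes explicit an assumption the paper uses tacitly here and again afterwards (the subsequent spectral estimate simply declares ``by assumption, $m\geq\varepsilon>0$''). The one caveat is that the lemma as literally stated carries no invertibility or coercivity hypothesis, so what you prove is a corrected statement rather than the literal one; that is the right call, since the literal statement does not follow from the definition given.
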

\begin{proof}
It can be seen that
\begin{eqnarray}
|\mathcal{A}[x](x)|&=&|\scalprod{}{Ax}{x}|\\
&=&|\scalprod{}{a^\dagger ax}{x}|\\
&=&|\scalprod{}{ax}{ax}|\\
&=&\norm{}{ax}^2
\end{eqnarray}
wich implies that $|\mathcal{A}[x](x)|$ is (SPD) and that there exists $\delta>0$ such that
\begin{equation}
\norm{}{ax}\geq \delta \norm{}{x}.
\end{equation}
taking $\varepsilon=\delta^2$ concludes the proof.
\end{proof}

\begin{theorem}\label{tspec1}
Let $A\in \mathscr{A}(X(G))$ be exactly factorizable in $\mathscr{A}(X(G))$. Then we will have
\begin{eqnarray}
\norm{}{A_{n,h}}\leq \norm{}{A} , n\geq 1, \:\:\:\: \lim_{h\to 0^+}\norm{}{A_{n,h}}=\norm{}{A}\nonumber\\
\norm{}{A_{n,h}^{-1}}\leq \norm{}{A^{-1}} , n\geq 1, \:\:\:\:\:\: \lim_{h\to 0^+}\norm{}{A_{n,h}^{-1}}=\norm{}{A^{-1}}\nonumber.
\end{eqnarray}
\end{theorem}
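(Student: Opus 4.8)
The plan is to exploit the fact that exact factorizability is inherited by the particular representation, so that $A_{n,h}$ is itself a positive, exactly factorizable operator whose extreme eigenvalues control both norms. First I would record the structural facts encoded in diagram~\eqref{diag4}: with respect to the discrete inner product induced by $\mathcal{M}_{n,h}$, the expansion factor $p_{n,h}$ is an isometry onto $X_{n,h}$, the decomposition factor $p^{\dagger}_{n,h}$ is its adjoint, one has $p^{\dagger}_{n,h}p_{n,h}=\mathbf{1}$ and $p_{n,h}p^{\dagger}_{n,h}=P_{n,h}$, and consequently
\[
A_{n,h}=p^{\dagger}_{n,h}Ap_{n,h}=p^{\dagger}_{n,h}a^{\dagger}ap_{n,h}=(ap_{n,h})^{\dagger}(ap_{n,h})=a_{n,h}^{\dagger}a_{n,h}.
\]
Thus $A_{n,h}$ is exactly factorizable in $\mathscr{A}_{n,h}$, hence symmetric and positive definite by the preceding lemma, and its operator norm and the norm of its inverse are governed by its largest and least eigenvalues respectively.

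For the two upper estimates I would combine the C*-identity with the isometry property. Since $\norm{}{p_{n,h}}=1$, the factorization yields $\norm{}{A_{n,h}}=\norm{}{a_{n,h}}^{2}=\norm{}{ap_{n,h}}^{2}\leq\norm{}{a}^{2}=\norm{}{A}$, which is the first inequality. For the inverse I would pass to eigenvalues: writing $\lambda_{\min}(\cdot)$ for the least eigenvalue of a positive operator, the compression identity $\lambda_{\min}(A_{n,h})=\min_{v\neq 0}\scalprod{}{Ap_{n,h}v}{p_{n,h}v}\norm{}{p_{n,h}v}^{-2}$ together with the min-max (Courant--Fischer) characterization of $\lambda_{\min}(A)$ shows $\lambda_{\min}(A_{n,h})\geq\lambda_{\min}(A)$, since the minimum taken over the subspace $X_{n,h}$ can only exceed the minimum taken over all of $X$. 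As $\norm{}{A_{n,h}^{-1}}=\lambda_{\min}(A_{n,h})^{-1}$ and $\norm{}{A^{-1}}=\lambda_{\min}(A)^{-1}$, this gives the second inequality.

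For the two limits I would invoke the defining convergence $P_{n,h}x\conv{}{h\to 0^+}x$ of the particular projector. The upper bounds already supply $\limsup_{h\to 0^+}\norm{}{A_{n,h}}\leq\norm{}{A}$ and $\limsup_{h\to 0^+}\norm{}{A_{n,h}^{-1}}\leq\norm{}{A^{-1}}$. For the reverse directions I would fix a near-maximizer $x$ of $\norm{}{Ax}\norm{}{x}^{-1}$, respectively a near-minimizer of $\scalprod{}{Ax}{x}\norm{}{x}^{-2}$, and test $A_{n,h}$ against $p^{\dagger}_{n,h}x$. The isometry relations give $\norm{}{A_{n,h}p^{\dagger}_{n,h}x}=\norm{}{P_{n,h}AP_{n,h}x}$ and $\norm{}{p^{\dagger}_{n,h}x}=\norm{}{P_{n,h}x}$, so that the associated Rayleigh quotients converge, by boundedness of $A$ and $P_{n,h}x\conv{}{h\to 0^+}x$, to $\norm{}{Ax}\norm{}{x}^{-1}$ and to $\scalprod{}{Ax}{x}\norm{}{x}^{-2}$ respectively. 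Letting the approximation improve yields $\liminf_{h\to 0^+}\norm{}{A_{n,h}}\geq\norm{}{A}$ and $\limsup_{h\to 0^+}\lambda_{\min}(A_{n,h})\leq\lambda_{\min}(A)$, and combining these with the inequalities already established produces both limits.

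The step I expect to be the main obstacle is the very first one: justifying rigorously, from the inner product matrix construction, that $p_{n,h}$ is a genuine isometry and that $p^{\dagger}_{n,h}$ is its adjoint, so that the clean factorization $A_{n,h}=a_{n,h}^{\dagger}a_{n,h}$ and the C*-identity $\norm{}{A_{n,h}}=\norm{}{a_{n,h}}^{2}$ hold with respect to the discrete norm $\norm{}{\cdot}=\norm*{2}{m_{n,h}\cdot}$ rather than the ambient Euclidean norm. Once the discrete inner product is normalized so that $\scalprod{}{p_{n,h}u}{p_{n,h}v}=\scalprod{n,h}{u}{v}$, which is precisely what the SPD inner product matrix $\mathcal{M}_{n,h}$ encodes, the remaining arguments reduce to the C*-identity, the min-max principle, and the projector convergence.
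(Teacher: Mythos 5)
Your proposal is correct and follows essentially the same route as the paper: both arguments rest on the observation that $A_{n,h}=a_{n,h}^{\dagger}a_{n,h}$ is symmetric positive definite, identify $\norm{}{A_{n,h}}$ and $\norm{}{A_{n,h}^{-1}}$ with the extreme Rayleigh quotients $M_{n,h}$ and $1/m_{n,h}$, and obtain the inequalities from the fact that restricting the Rayleigh quotient to the range of $P_{n,h}$ can only raise the infimum and lower the supremum. The only differences are cosmetic: you reach the first inequality through the C*-identity $\norm{}{A_{n,h}}=\norm{}{a_{n,h}}^{2}$ rather than directly through $M_{n,h}\leq M$, and you spell out the near-extremizer argument for the limits, which the paper asserts without detail.
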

\begin{proof}
Lets define
\begin{eqnarray}
m&:=&\inf_{x\neq 0}\frac{\scalprod{}{Ax}{x}}{\scalprod{}{x}{x}}, M:=\sup_{x\neq 0}\frac{\scalprod{}{Ax}{x}}{\scalprod{}{x}{x}}\\
m_{n,h}&:=&\inf_{x\neq 0}\frac{\scalprod*{n,h}{AP_{n,h}x}{x}}{\scalprod*{n,h}{x}{x}}, M_{n,h}:=\sup_{x\neq 0}\frac{\scalprod*{n,h}{AP_{n,h}x}{x}}{\scalprod*{n,h}{x}{x}}
\end{eqnarray}
By assumption, $m\geq \varepsilon >0$. We have
\begin{eqnarray}
m_{n,h}&=&\inf_{x\neq 0}\frac{\scalprod*{m,h}{AP_{n,h}x}{x}}{\scalprod*{n,h}{x}{x}}=\inf_{x\neq 0}\frac{\scalprod{}{P_{n,h}AP_{n,h}x}{P_{n,h}x}}{\scalprod{}{P_{n,h}x}{P_{n,h}x}}\\
&=&\inf_{x\neq 0}\frac{\scalprod{}{AP_{n,h}x}{P_{n,h}x}}{\scalprod{}{P_{n,h}x}{P_{n,h}x}}\geq \inf_{x\neq 0}\frac{\scalprod{}{Ax}{x}}{\scalprod{}{x}{x}}=m
\end{eqnarray}
and , analogously,
\begin{eqnarray}
M_{n,h}&=&\sup_{x\neq 0}\frac{\scalprod*{m,h}{AP_{n,h}x}{x}}{\scalprod*{n,h}{x}{x}}=\sup_{x\neq 0}\frac{\scalprod{}{P_{n,h}AP_{n,h}x}{P_{n,h}x}}{\scalprod{}{P_{n,h}x}{P_{n,h}x}}\\
&=&\sup_{x\neq 0}\frac{\scalprod{}{AP_{n,h}x}{P_{n,h}x}}{\scalprod{}{P_{n,h}x}{P_{n,h}x}}\leq \sup_{x\neq 0}\frac{\scalprod{}{Ax}{x}}{\scalprod{}{x}{x}}=M.
\end{eqnarray}
Because
\begin{equation}
\norm{}{A}=M, \:\: \norm{}{A_{n,h}}=M_{n,h}, \:\: \norm{}{A^{-1}}=1/m, \:\: \norm{}{A_{n,h}^{-1}}=1/m_{n,h},
\end{equation}
we arrive at the inequalities $\norm{}{A_{n,h}}\leq\norm{}{A}$ and $\norm{}{A_{n,h}^{-1}}\leq\norm{}{A^{-1}}$ for all $n$ and $0<h<1$. It is clear that $\norm{}{A_{n,h}}\to \norm{}{A}$, and since
\begin{equation}
\norm{}{A^{-1}}\leq\liminf_{h\to0^+}\norm{}{A_{n,h}^{-1}}\leq\limsup_{h\to0^+}\norm{}{A_{n,h}^{-1}}\leq\norm{}{A^{-1}},
\end{equation}
it follows that $\norm{}{A_{n,h}^{-1}}\conv{}{h\to0^+}\norm{}{A^{-1}}$, for each $n\geq 1$.
\end{proof}

If we denote by $\lambda_{min}(A)$ and $\lambda_{max}(A)$ the minimal and maximal eigenvalues of an operator $A\in\mathscr{A}(X)$ respectively, with $\mathscr{A}(X)$ a C*-algebra of operators over a Banach space $X$, an operator $A\in\mathscr{A}(H^n(G))$ that is exactly factorizable in $\mathscr{A}(H^n(G))$ satifies the following estimate.

\begin{theorem}
If $A\in\mathscr{A}(H^n(G))$ is exactly factorizable in $\mathscr{A}(H^n(G))$, then
\begin{eqnarray}
&&m\leq\lambda_{min}(A_{n,h})\leq\lambda_{max}(A_{n,h})\leq M,\label{spec1}\\
&&\lim_{h\to0^+}\lambda_{min}(A_{n,h})=m, \lim_{h\to0^+}\lambda_{max}(A_{n,h})=M\label{spec2}
\end{eqnarray}
\begin{equation}
\{m,M\}\subset\sigma(A)\subset\liminf_{h\to0^+}\sigma(A_{n,h})\subset\limsup_{h\to0^+}\sigma(A_{n,h})\subset[m,M]\label{spec3}.
\end{equation}
\end{theorem}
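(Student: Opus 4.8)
The plan is to reduce the eigenvalue estimates \eqref{spec1} and \eqref{spec2} to the preceding theorem and to treat the spectral inclusions \eqref{spec3} separately. Since $A$ is exactly factorizable, $A=a^\dagger a$ is self-adjoint and positive, so each particular representation $A_{n,h}$ is an SPD matrix with respect to the inner product matrix $\mathcal{M}_{n,h}$. For such a matrix the Courant--Fischer characterization identifies the extreme eigenvalues with the extreme Rayleigh quotients, that is
\begin{equation*}
\lambda_{min}(A_{n,h})=m_{n,h},\qquad \lambda_{max}(A_{n,h})=M_{n,h},
\end{equation*}
where $m_{n,h}$ and $M_{n,h}$ are precisely the quantities introduced in the proof of T.\ref{tspec1}. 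First I would invoke the inequalities $m\leq m_{n,h}$ and $M_{n,h}\leq M$ already established there, together with the trivial $m_{n,h}\leq M_{n,h}$, to obtain \eqref{spec1} at once. The limits in \eqref{spec2} then follow from the norm convergences of T.\ref{tspec1}, since $\norm{}{A_{n,h}}=M_{n,h}\conv{}{h\to0^+}M$ and $\norm{}{A_{n,h}^{-1}}=1/m_{n,h}\conv{}{h\to0^+}1/m$.

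For the chain \eqref{spec3} I would establish the four inclusions in turn. The inclusion $\{m,M\}\subset\sigma(A)$ is the standard fact that for a bounded self-adjoint operator the extreme Rayleigh quotients $m=\inf\sigma(A)$ and $M=\sup\sigma(A)$ are themselves spectral values, the spectrum being closed. The inclusion $\liminf_{h\to0^+}\sigma(A_{n,h})\subset\limsup_{h\to0^+}\sigma(A_{n,h})$ is immediate from the definition of the lower and upper Kuratowski limits of a family of sets. The inclusion $\limsup_{h\to0^+}\sigma(A_{n,h})\subset[m,M]$ follows from \eqref{spec1}, since each $\sigma(A_{n,h})$ is contained in $[\lambda_{min}(A_{n,h}),\lambda_{max}(A_{n,h})]\subset[m,M]$ and $[m,M]$ is closed, so every accumulation point as $h\to0^+$ remains inside it.

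The hard part will be the remaining inclusion $\sigma(A)\subset\liminf_{h\to0^+}\sigma(A_{n,h})$, the genuine spectral approximation statement. I would fix $\lambda\in\sigma(A)$ and show $\operatorname{dist}(\lambda,\sigma(A_{n,h}))\conv{}{h\to0^+}0$. Since $A-\lambda\mathbf{1}$ fails to be invertible, T.\ref{stability1} in its contrapositive form rules out stability of the family $\{A_{n,h}-\lambda\mathbf{1}\}$; and because $A_{n,h}-\lambda\mathbf{1}$ is self-adjoint with respect to $\mathcal{M}_{n,h}$ one has $\norm{}{(A_{n,h}-\lambda\mathbf{1})^{-1}}=\operatorname{dist}(\lambda,\sigma(A_{n,h}))^{-1}$, so the distance cannot remain bounded away from zero. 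The delicate point is to upgrade this from a subsequential statement to a full limit along $h\to0^+$: for this I would take an approximate eigenvector $u$ for $\lambda$ with $\norm{}{u}=1$ and $\norm{}{(A-\lambda\mathbf{1})u}$ small, form the vectors $P_{n,h}u$, and use the strong convergence $P_{n,h}u\conv{}{h\to0^+}u$ guaranteed by the particular projector axioms to show that $\norm{}{(A_{n,h}-\lambda\mathbf{1})P_{n,h}u}$ is small compared with $\norm{}{P_{n,h}u}$ for all sufficiently small $h$. Self-adjointness of $A_{n,h}$ then forces $\operatorname{dist}(\lambda,\sigma(A_{n,h}))$ to be small for every such $h$, which is exactly liminf membership. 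Controlling this projection error uniformly in $h$, so that no spurious spectral gap can reopen near $\lambda$, is the main obstacle.
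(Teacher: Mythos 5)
Your proposal is correct, and for the easy parts (\eqref{spec1}, \eqref{spec2}, and the three straightforward inclusions in \eqref{spec3}) it coincides with the paper, which likewise just points back to the proof of Theorem \ref{tspec1} and declares everything but \eqref{nontrivial} trivial. For the key inclusion $\sigma(A)\subset\liminf_{h\to0^+}\sigma(A_{n,h})$ you take a genuinely different route: the paper argues by contraposition, assuming $\lambda\notin\liminf\sigma(A_{n,h})$, deducing a uniform resolvent bound $\|(A_{n,h}-\lambda\mathbf{1})^{-1}\|\leq 1/\varepsilon$ from self-adjointness (norm $=$ spectral radius $=$ inverse distance to the spectrum), concluding that the shifted family is stable, and then invoking Theorem \ref{stability1} to get invertibility of $A-\lambda\mathbf{1}$, i.e.\ $\lambda\notin\sigma(A)$. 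You instead argue directly with a Weyl sequence: an approximate eigenvector $u$ for $\lambda$, pushed through $P_{n,h}$, yields $\|(A_{n,h}-\lambda\mathbf{1})P_{n,h}u\|$ small relative to $\|P_{n,h}u\|$, and self-adjointness converts this into $\operatorname{dist}(\lambda,\sigma(A_{n,h}))$ small. Both are valid; your direct argument is constructive and quantitative (the distance is controlled by $\|(A-\lambda\mathbf{1})u\|$ plus a projection error), while the paper's is shorter because it recycles the stability machinery. Two remarks: the ``main obstacle'' you flag is not really one, since the projector axiom $\|P_{n,h}-\mathbf{1}\|\leq c\,h^{\mu_m}$ already gives the uniform control you need, and the estimate $\|(A_{n,h}-\lambda\mathbf{1})P_{n,h}u\|\leq\|P_{n,h}(A-\lambda\mathbf{1})u\|+\|A-\lambda\mathbf{1}\|\,\|(P_{n,h}-\mathbf{1})u\|$ closes the argument for all sufficiently small $h$, not just a subsequence. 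Conversely, the subsequence issue you worry about is actually present in the paper's contrapositive: negating liminf membership only yields a sequence $h_k\to0^+$ along which $U_\varepsilon(\lambda)\cap\sigma(A_{n,h_k})=\emptyset$, and one must check (as one can) that the proof of Theorem \ref{stability1} goes through along such a subsequence; your direct proof sidesteps that entirely.
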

\begin{proof}
The validity of \eqref{spec1} and \eqref{spec2} was established in the proof of T.\ref{tspec1}. The only nontrivial part of \eqref{spec3} is the inclusion
\begin{equation}
\sigma(A)\subset\liminf_{h\to0^+}\sigma(A_{n,h}) \label{nontrivial}.
\end{equation}
So assume $\lambda\in\mathbb{R}$ is not in $\liminf \sigma(A_{n,h})$. Then there is an $\varepsilon>0$ such that
\begin{equation}
U_\varepsilon(\lambda)\cap\sigma(A_{n,h})=\emptyset \: \: \text{for all} \: \: n\geq n_0,
\end{equation}
where $U_\varepsilon(\lambda):=\{z\in\mathbb{C}:|z-\lambda|<\varepsilon\}$. Hence $U_\varepsilon(0)\cap\sigma(A_{n,h}-\lambda\mathbf{1})$ for all $n\geq n_0$, and since $(A-\lambda\mathbf{1})^{-1}$ is symmetric, and therefore the norm coincides with the spectral radius, we get
\begin{equation}
\norm{}{(A-\lambda\mathbf{1})^{-1}}\leq 1/\varepsilon \: \: \text{for all} \: \: n\geq n_0.
\end{equation}
It follows that $\{A_{n,h}-\lambda \mathbf{1}\}_{n\geq n_0}$ is stable, and therefore $A-\lambda\mathbf{1}$ must be invertible by theorem T.\ref{stability1}. Consequently, $\lambda \notin \sigma(A)$, wich completes the proof of \eqref{nontrivial}.
\end{proof}

If an exactly factorizable operator $A\in\mathscr{A}(X(G))$ is $X(G)$-coercive and $X(G)$ is discretizable we can verify that
\begin{eqnarray}
|\mathcal{A}_{m,h}[\hat{x}](\hat{x})|&=&\scalprod{}{P_{m,h}AP_{m,h}x}{P_{m,h}x}=\scalprod{}{a^\dagger aP_{m,h}x}{P_{m,h}x}\\
&=&\scalprod{}{aP_{m,h}x}{aP_{m,h}x}=\scalprod{}{P_{m,h}aP_{m,h}x}{P_{m,h}aP_{m,h}x}\\
&=&\scalprod*{X_{m,h}(G)}{a_{m,h}\hat{x}}{a_{m,h}\hat{x}}\geq m \norm{}{x}^2
\label{eqp1}
\end{eqnarray}
this leads us to the following.

\begin{theorem}
If an exactly factorizable operator $A\in\mathscr{A}(X(G))$ is $X(G)$-coercive and $X(G)$ is discretizable then $A_{m,h}:=p^\dagger_{m,h}Ap_{m,h}$ is invertible.
\end{theorem}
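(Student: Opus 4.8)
The plan is to reduce the claim to the nondegeneracy of the discrete coercivity estimate already established in \eqref{eqp1} together with the finite-dimensionality of the discrete space. Since $X(G)$ is discretizable, the particular projector $P_{m,h}$ has a finite basis $\mathscr{P}=\{p_1,\dots,p_{N_m}\}$, so $X^\ast_{m,h}(G)$ is finite-dimensional and $A_{m,h}:=p^\dagger_{m,h}Ap_{m,h}$ acts as a square matrix. For square matrices invertibility is equivalent to injectivity, so it suffices to show that $\ker A_{m,h}=\{0\}$.

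First I would record that the strict positivity of the coercivity constant is automatic in this setting: because $A$ is exactly factorizable, the preceding lemma gives $|\mathcal{A}[x](x)|\geq\varepsilon\norm{}{x}^2$ with $\varepsilon>0$, whence $m:=\inf_{x\neq0}\scalprod{}{Ax}{x}/\scalprod{}{x}{x}\geq\varepsilon>0$. Substituting this into the chain \eqref{eqp1} yields $|\mathcal{A}_{m,h}[\hat{x}](\hat{x})|\geq m\norm{}{x}^2$ with $m>0$ for every $\hat{x}=p^\dagger_{m,h}x$. The same computation also identifies $\mathcal{A}_{m,h}[\hat{x}](\hat{x})$ with $\norm{}{a_{m,h}\hat{x}}^2=\scalprod*{m,h}{A_{m,h}\hat{x}}{\hat{x}}$, using the discrete factorization $A_{m,h}=a^\dagger_{m,h}a_{m,h}$ of diagram \eqref{diag4}.

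Second, I would close the injectivity argument. If $\hat{x}\in\ker A_{m,h}$ then $A_{m,h}\hat{x}=0$, so $\mathcal{A}_{m,h}[\hat{x}](\hat{x})=\scalprod*{m,h}{A_{m,h}\hat{x}}{\hat{x}}=0$; the estimate above then forces $\norm{}{x}=0$, hence $\hat{x}=p^\dagger_{m,h}x=0$. This proves that $A_{m,h}$ is injective, and being a square matrix it is therefore invertible. The main obstacle I anticipate is bookkeeping rather than conceptual: one must match the discrete form $\mathcal{A}_{m,h}[\hat{x}](\hat{x})$ with the pairing $\scalprod*{m,h}{A_{m,h}\hat{x}}{\hat{x}}$ consistently, and pass cleanly between the norm of $x$ appearing in \eqref{eqp1} and the nondegeneracy statement for $\hat{x}$, using the projector relations $P_{m,h}=p_{m,h}p^\dagger_{m,h}$ and $\hat{x}=p^\dagger_{m,h}x$ so that the positivity is genuinely nondegenerate on $X^\ast_{m,h}(G)\setminus\{0\}$.

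As two shortcuts that sidestep this bookkeeping, I would note first that the eigenvalue bound \eqref{spec1} from the previous theorem gives $0<m\leq\lambda_{min}(A_{m,h})$, so that $0\notin\sigma(A_{m,h})$ and $A_{m,h}$ is invertible at once; alternatively, one may apply the Lax--Milgram lemma on the finite-dimensional Hilbert space $X^\ast_{m,h}(G)$, whose discrete form is coercive by \eqref{eqp1} and automatically bounded, to conclude that $A_{m,h}$ is a bijection.
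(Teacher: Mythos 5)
Your proposal is correct and follows essentially the same route as the paper, whose entire proof is the one-line observation that \eqref{eqp1} shows $A_{m,h}$ is positive definite and hence invertible. You simply make explicit the details the paper leaves implicit (strict positivity of the coercivity constant from the exact-factorization lemma, finite dimensionality of $X^\ast_{m,h}(G)$, and injectivity implying invertibility for square matrices), and your suggested shortcuts via \eqref{spec1} or Lax--Milgram are consistent alternatives rather than a different argument.
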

\begin{proof}
From \eqref{eqp1} it is clear that $A_{m,h}$ is positive definite hence invertible.
\end{proof}

\subsection{Discrete Time Integration} In general, evolution equations of the types studied here can be expressed in the form
\begin{equation}
\left \{
\begin{array}{l}
u'(t)=f(u(t))\\
u(0)=u_0
\end{array}
\right .
\label{gform}
\end{equation}
if we define $\mathcal{H}^k_\delta(G;r)$ by
\begin{equation}
\mathcal{H}^k_\delta(G;r):=\{v\in C([-\delta,\delta],H^1(G)):v\in\overline{B}_r(0), \forall t\in [-\delta,\delta]\}
\end{equation}
and taking the operator $T:\mathcal{H}^k_\delta(G;r)\rightarrow H^1(G_\tau)$ to be defined by
\begin{eqnarray}
T&:&\mathcal{H}^k_\delta(G;r)\longrightarrow H^1(G_\tau)\\
&:&v\longmapsto v_0+\int_0^tf(v(\tau))d\tau
\label{opT}
\end{eqnarray}
and if $f\in C^{\alpha=1}(\mathcal{H}^k(G;r))$ then it can be seen that if we define $\norm*{\mathcal{H}^k_\delta(G;r)}{\cdot}$ by
\begin{equation}
\norm*{\mathcal{H}^k_\delta(G;r)}{v}:=\sup_{t\in[-\delta,\delta]}\norm{}{v}
\end{equation}
where like in the above sections $\norm{}{\cdot}:=\norm*{H^n(G)}{\cdot}$, from these expressions we get
\begin{equation}
\norm{}{T(v)-v_0}=\norm{}{\int_0^tf(v(\tau))d\tau}\leq M_f\delta \label{proof1}
\end{equation}
where $M_f:=\sup_{v\in\mathcal{H}^k(G;r)}\norm{}{f(v)}$, so if $M_f\delta < R$, then $T(v)\in\mathcal{H}^k_\delta(G;r)$ so that $T\in\mathcal{L}(\mathcal{H}^k_\delta(G;r))$. Also we have
\begin{eqnarray}
\norm*{\mathcal{H}^k_\delta(G;r)}{T(u)-T(v)}&=&\sup_{t\in[-\delta,\delta]}\norm{}{\int_0^t(f(u(\tau))-f(v(\tau)))d\tau}\label{proof2}\\
&\leq&c_f\norm*{\mathcal{H}^k_\delta(G;r)}{u-v}\int_0^\delta d\tau\\
&\leq&c_f\delta\norm*{\mathcal{H}^k_\delta(G;r)}{u-v} \label{proof3}
\end{eqnarray}
so if we choose $\delta<\min \{R/M_f,1/c_f\}$ we can see that $T\in\mathcal{L}(\mathcal{H}^k_\delta(G;r))$ will be a strict contraction. The above results con be sumarized in the following.

\begin{theorem} \label{picard} Picard Existence theorem. Let $Y(G)$ be a Banach space. Suppose that $f\in C^{\alpha=1}(\mathcal{Y}_\delta(G;r)), \mathcal{Y}_\delta(G;r)\subseteq Y(G)$. Let $M_f:=\sup_{v\in\mathcal{Y}_\delta(G;r)}\norm{}{f(v)}$. The initial value problem \eqref{gform}
has a unique local solution u(t). This solution is defined for $t\in(-\delta,\delta)$ with $\delta=\min\{R/M_f,1/c_f\}$.
\end{theorem}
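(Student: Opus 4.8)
The plan is to recast the initial value problem \eqref{gform} as a fixed-point equation for the Picard-type operator $T$ of \eqref{opT} and then invoke the Contraction Mapping Principle (T.\ref{contraction}). First I would observe that, since $Y(G)$ is a Banach space, the estimates already derived for $\mathcal{H}^k_\delta(G;r)$ in the lines preceding the statement carry over verbatim with $Y(G)$ in place of $H^1(G)$ and with $\mathcal{Y}_\delta(G;r)$, the closed $r$-ball in $C([-\delta,\delta],Y(G))$, in place of $\mathcal{H}^k_\delta(G;r)$; the norm $\norm*{\mathcal{Y}_\delta(G;r)}{v}:=\sup_{t\in[-\delta,\delta]}\norm{}{v(t)}$ is defined in the same way.

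The argument then proceeds in four steps. (i) Equivalence: a continuous $u:[-\delta,\delta]\to Y(G)$ solves \eqref{gform} if and only if it satisfies $u=T(u)$, i.e. $u(t)=u_0+\int_0^t f(u(\tau))\,d\tau$; the forward implication is integration of $u'=f(u)$, and the reverse follows by differentiating the integral, which simultaneously shows that any such fixed point is automatically of class $C^1$ in $t$ and meets $u(0)=u_0$. (ii) Self-map: by \eqref{proof1} one has $\norm{}{T(v)-v_0}\leq M_f\delta$, so the constraint $\delta\leq R/M_f$ forces $T(\mathcal{Y}_\delta(G;r))\subseteq\mathcal{Y}_\delta(G;r)$. (iii) Strict contraction: by \eqref{proof2}--\eqref{proof3} one has $\norm*{\mathcal{Y}_\delta(G;r)}{T(u)-T(v)}\leq c_f\delta\,\norm*{\mathcal{Y}_\delta(G;r)}{u-v}$, so the constraint $\delta<1/c_f$ makes $T$ a strict contraction with constant $K=c_f\delta<1$. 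Taking $\delta=\min\{R/M_f,1/c_f\}$ secures both conditions at once. (iv) Conclusion: apply T.\ref{contraction} to produce the unique fixed point $u\in\mathcal{Y}_\delta(G;r)$, which by (i) is the unique local solution of \eqref{gform} on $(-\delta,\delta)$; Remark \ref{rcontraction} additionally supplies an a priori error bound for the Picard iterates $T^n(v_0)$.

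The main obstacle will be matching the hypotheses of T.\ref{contraction}, which is stated for a strict contraction of a Banach space $X$, to the present setting, where $T$ only maps the closed ball $\mathcal{Y}_\delta(G;r)$ into itself and this ball is not a linear subspace. To bridge this I would note that $\mathcal{Y}_\delta(G;r)$ is a closed subset of the Banach space $C([-\delta,\delta],Y(G))$ and hence a complete metric space, and that the proof of T.\ref{contraction} uses only completeness together with the contraction estimate---never linearity---so the iterates $x_n=T^n(x_0)$ remain Cauchy and converge to a limit lying in $\mathcal{Y}_\delta(G;r)$; the uniqueness portion is likewise unaffected. A secondary point to make fully rigorous is the interchange of differentiation and $Y(G)$-valued integration in step (i), which is exactly what upgrades the continuous fixed point to a genuine $C^1$ solution and recovers $u'=f(u)$ from the integral equation.
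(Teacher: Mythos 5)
Your proposal follows essentially the same route as the paper: the paper derives the self-map bound \eqref{proof1} and the contraction estimate \eqref{proof2}--\eqref{proof3} in the text immediately preceding the theorem, and its proof simply cites these and invokes the Contraction Mapping Principle, exactly as you do in steps (ii)--(iv). Your additional steps (i) and the remark that T.\ref{contraction} really needs only completeness of the closed ball $\mathcal{Y}_\delta(G;r)$ are details the paper leaves implicit, but they do not change the argument.
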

\begin{proof}
From \eqref{proof1} and from \eqref{proof2} to \eqref{proof3} we have that $T\in\mathcal{\mathcal{Y}_\delta(G;r)}$ is a strict contraction for $\delta<R/M_f$, from this fact the result follows.
\end{proof}

\begin{corollary}\label{corollary2}
The solution $u(t)$ to \eqref{gform} satisfies the estimate
\begin{equation}
\norm*{\mathcal{H}^k_\delta(G;r)}{u(t)-u_k(t)}\leq\frac{(c_f\delta)^k}{1-(c_f\delta)}M_f\delta.
\end{equation}
\end{corollary}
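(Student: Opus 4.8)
The plan is to recognize this estimate as a direct application of Remark \ref{rcontraction} to the Picard iteration associated with the fixed-point map $T$ from \eqref{opT}. First I would set up the Picard iterates explicitly as $u_k := T^k(u_0)$, taking the zeroth iterate $u_0$ to be the constant-in-time function equal to $v_0$. With this convention, the unique solution $u(t)$ furnished by Theorem \ref{picard} is precisely the fixed point of $T$ guaranteed by the Contraction Mapping Principle (Theorem \ref{contraction}), so that the abstract iterate $x_m = T^m(x_0)$ of the earlier remark is identified with $u_k$ upon setting $m = k$.

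Next I would invoke the fact, already established in the proof of Theorem \ref{picard} through \eqref{proof2}--\eqref{proof3}, that $T$ is a strict contraction on $\mathcal{H}^k_\delta(G;r)$ with contraction constant $K = c_f\delta < 1$ whenever $\delta < \min\{R/M_f, 1/c_f\}$. Remark \ref{rcontraction} applied with $K = c_f\delta$, fixed point $x = u(t)$, and iterate $x_m = u_k(t)$ then yields at once
\[
\norm*{\mathcal{H}^k_\delta(G;r)}{u(t) - u_k(t)} \leq (c_f\delta)^k (1 - c_f\delta)^{-1} \norm*{\mathcal{H}^k_\delta(G;r)}{u_1 - u_0}.
\]

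The remaining step is to control the base distance $\norm*{\mathcal{H}^k_\delta(G;r)}{u_1 - u_0}$. Since $u_1 = T(u_0) = v_0 + \int_0^t f(v_0)\,d\tau$, estimate \eqref{proof1} gives $\norm{}{T(u_0) - v_0} \leq M_f\delta$ uniformly for $t \in [-\delta,\delta]$; because this bound is uniform in $t$, taking the supremum over $t$ in the definition of the $\mathcal{H}^k_\delta(G;r)$-norm preserves it, so $\norm*{\mathcal{H}^k_\delta(G;r)}{u_1 - u_0} \leq M_f\delta$. Substituting into the displayed inequality produces the claimed bound.

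The only point requiring care, and the step I expect to be the main (though mild) obstacle, is the bookkeeping of norms: the contraction constant and the bound \eqref{proof1} are naturally stated with the pointwise $H^n(G)$-norm, whereas the conclusion is phrased in the $\mathcal{H}^k_\delta(G;r)$-norm. I would therefore be explicit that passing to the supremum over $t$ neither enlarges the contraction factor nor the base estimate $M_f\delta$, and that the iteration index $k$ in $u_k$ matches the exponent appearing in the geometric factor of Remark \ref{rcontraction}; once this matching is verified the estimate is immediate.
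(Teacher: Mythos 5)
Your proposal is correct and follows exactly the route the paper intends: the paper's own proof is the single line ``Follows from remark R.\ref{rcontraction},'' and you have simply supplied the bookkeeping — identifying $K=c_f\delta$, $x_m=u_k=T^k(u_0)$, and bounding $\norm*{\mathcal{H}^k_\delta(G;r)}{u_1-u_0}\leq M_f\delta$ via \eqref{proof1} — that the paper leaves implicit. Your attention to the pointwise-versus-supremum norm issue is a reasonable extra precaution but does not change the argument.
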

\begin{proof}
Follows from remark R.\ref{rcontraction}.
\end{proof}

Time integration of space discretized evolution equations of the form 
\begin{equation}
\left\{
\begin{array}{l}
x'(t)=F(x(t)) \\
x(0)=x_0
\end{array}
\right .
\label{ev01}
\end{equation}
where $F \in C^{\alpha=1}(\Omega), \Omega\subset\mathbb{C}^N$ can be performed using the operator $S:\mathbb{C}^N\times \mathbb{R}\longrightarrow \mathbb{C}^N$ defined by
\begin{equation}
S[y_k(t_0)](t):=y_k(t_0)+\int_{t_0}^tF(y_k(s))ds
\label{ev03}
\end{equation}
with $y_{k}(t):=T^k[x_0](t)$ and where $T\in\mathcal{L}(\mathbb{C}\times\mathbb{R},\mathbb{C})$ is a succesive approximation operator analogous to the one defined in \eqref{opT}, it can be seen that \eqref{ev03} can be expressed in the form
\begin{equation}
S[T^k[x_0](t_0)](t)=T^k[x_0](t_0)+\int_{t_0}^tF(T^k[x_0](s))ds
\label{ev04}
\end{equation}
from this equation it can be obtained that $S[T^k[x_0](t_0)](t_0)=T^k[x_0](t_0)$ wich implies that $S[(\cdot)(t_0)](t_0)=\mathbf{1}$, it can be seen also that
\begin{eqnarray*}
S[T^k[x_0](t)](t')&=&S[S[T^k[x_0](t_0)](t)](t') \\
&=&T^k[x_0](t_0)+\int_{t_0}^tF(T^{k}[x_0](s))ds +\int_{t}^{t'} F(T^k[x_0](s))ds\\
&=&T^k[x_0](t_0)+\int_{t_0}^{t'} F(T^k[x_0](s))ds\\
&=&S[T^k[x_0](t_0)](t')
\end{eqnarray*}
Now if we take a time interval $[t_0,\tau]$ to be partitioned in the form $t_n:=t_0+nh, n=0,\cdots,N$ and where $h=(\tau-t_0)/N$ the last relation presented above permits us to obtain a discrete expression for the time integration operator $S_h[x_n](m)$ in the following way
\begin{equation}
S_{n}[x_k(0)]:=S[T^k[x_0](t_0)](n)=T^k[x_0](t_0)+\int_{t_0}^{t_0+nh}F(T^k[x_0](s))ds
\label{dsem1}
\end{equation}
using this expression and without any lack of generality it can be observed that for any two $m,n\in\mathbb{Z}_0^+$ such that $m>n$ we will have
\begin{eqnarray}
S_m\circ S_n[x_k(0)]&=&S_n[x_k(0)]+\int_{t_0+nh}^{t_0+nh+mh}F(T^k[x_0](s))ds\\ \nonumber
&=&T^k[x_0](t_0)+\int_{t_0}^{t_0+nh}F(T^k[x_0](s))ds\\
&&+\int_{t_0+nh}^{t_0+nh+mh}F(T^k[x_0](s))ds\\
&=&T^k[x_0](t_0)+\int_{t_0}^{t_0+nh+mh}F(T^k[x_0](s))ds\\
&=&S_{m+n}[x_k(0)] \label{dsem2}
\end{eqnarray}
From \eqref{dsem1} and from \eqref{dsem2} it can be seen that $S_0[x]=x$ and also that the set $\{S_{n}[T^k[\cdot]] : n \in \mathbb{Z}_0^+\}$ defines a discrete semigroup with respect to composition. The action of discrete time integration operators over fixed elements of the approxmate solution to the evolution equations can be computed using quadrature rules with accuracy order higher or equal to the one induced by succesive approximation operators.

\section{Approximation of Nonlinear Evolution Equations}

The nonlinear evolution equations studied in this work will be related in some suitable sense to elliptic operators not necesarily linear.

\subsection{Approximation of Semilinear Elliptic Problems}
For a given semilinear problem of the form:
\begin{equation}
\left\{
\begin{array}{l}
A(u)=f(u), x\in G \\
Bu=u_b, x\in \partial G
\end{array}
\right .
\label{eq2}
\end{equation}
where $A(\cdot)\in\mathcal{L}(H^n(G))$ is $H^n(G)$-coercive exactly factorizable on $\mathscr{A}(H^n(G))$, one can obtain a particular representation of $A$ in $\mathscr{A}_{n,h}(H^m(G))$ using a particular projector compatible in some sense with the boundary conditions of $u\in H^m(G)$ in \eqref{eq2}. Now, if we have that  $f$ is locally H{\"o}lder continuous with exponent one ,i.e., $f\in C^\alpha(\overline{B}_r(0))$, with $\overline{B}_r(0)$ a closed ball of radius $r$ centered in $0$, from here on we will define the set $\mathcal{H}^n(G;r)\subseteq H^n(G)$ by
\begin{equation}
\mathcal{H}^m(G;r):=\{u\in H^m(G):u\in\overline{B}_r(0), \forall x\in \overline{G}\}
\end{equation}
also if there is an $0<\varepsilon_r\in\mathbb{R}$ that depends on $r$, small enough, such that there exists $c_f<\infty$ with
\begin{equation}
\norm*{H^m(G)}{f(u)-f(v)}\leq c_f\norm*{H^m(G)}{u-v}
\end{equation}
when $\norm*{H^m(G)}{u-v}\leq\varepsilon_r$, then we can obtain the following.

\begin{theorem}\label{tsem1}
If for the semilinear boundary value problem
\begin{equation}
\left\{
\begin{array}{l}
A(u)=f(u), x\in G \\
Bu=u_b, x\in \partial G
\end{array}
\right .
\label{eq3}
\end{equation}
we have that $f\in C^\alpha(\overline{B}_r(0))$ with $\alpha=1$, then we will have that the sequence defined by $\hat{u}_0:=p_{m,h}^\dagger u_0, \hat{u}_{k+1}:=\mathcal{G}_{n,h}f(\hat{u}_k)$ for a given $u_0\in X(G)$ converges to a unique approximation of the solution to \eqref{eq3} determined by $u_0$.
\end{theorem}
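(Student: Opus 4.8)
The plan is to recognize the iteration $\hat u_{k+1}=\mathcal{G}_{n,h}f(\hat u_k)$ as a Picard iteration for the fixed-point map $T_{n,h}:=\mathcal{G}_{n,h}\circ f$ acting on the finite-dimensional space $H^m_{n,h}(G)$, and then to invoke the Contraction Mapping Principle (T.\ref{contraction}). Since $H^m_{n,h}(G)$ is finite dimensional it is a Banach space, so a suitable closed ball $\overline{B}_r(0)$ in it is complete and T.\ref{contraction} applies once $T_{n,h}$ is shown to be a self-map of that ball and a strict contraction. First I would verify that $\mathcal{G}_{n,h}$ is well defined: because $A$ is $H^n(G)$-coercive and exactly factorizable, its particular representation $A_{n,h}=p^\dagger_{n,h}Ap_{n,h}$ is positive definite, hence invertible by the invertibility theorem for coercive exactly factorizable operators, and we set $\mathcal{G}_{n,h}=A_{n,h}^{-1}$, the discrete analogue of the Green operator supplied by the Lax-Milgram Lemma.

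The next step is to obtain a bound on $\norm{}{\mathcal{G}_{n,h}}$ that is uniform in $h$. By T.\ref{tspec1} we have $\norm{}{A_{n,h}^{-1}}\leq\norm{}{A^{-1}}=1/m$, where $m:=\inf_{x\neq 0}\scalprod{}{Ax}{x}/\scalprod{}{x}{x}\geq\varepsilon>0$ by the coercivity and exact factorizability of $A$. Hence $\norm{}{\mathcal{G}_{n,h}}\leq 1/m\leq 1/\varepsilon$ for every admissible $n$ and all sufficiently small $h$, the crucial point being that this estimate does not deteriorate as $h\to0^+$. With this bound in hand, the Lipschitz hypothesis $\norm{}{f(u)-f(v)}\leq c_f\norm{}{u-v}$, valid on $\overline{B}_r(0)$ since $f\in C^{\alpha=1}$, yields
\begin{equation}
\norm{}{T_{n,h}(\hat u)-T_{n,h}(\hat v)}\leq\norm{}{\mathcal{G}_{n,h}}\,\norm{}{f(\hat u)-f(\hat v)}\leq\frac{c_f}{m}\,\norm{}{\hat u-\hat v}.\nonumber
\end{equation}

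Writing $K:=c_f/m$, the map $T_{n,h}$ is a strict contraction precisely when $K<1$; just as in the proof of the Picard theorem (T.\ref{picard}), one also checks the self-mapping property $\norm{}{T_{n,h}(\hat u)}\leq\norm{}{\mathcal{G}_{n,h}}M_f\leq M_f/m\leq r$, with $M_f:=\sup_{\hat u\in\overline{B}_r(0)}\norm{}{f(\hat u)}$, so that the iterates never leave the ball on which $f$ is Lipschitz.

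The hard part is ensuring $K=c_f/m<1$ uniformly in $h$: this is exactly where the smallness assumption on $\varepsilon_r$ enters, forcing $c_f<\varepsilon\leq m$, and where the uniform estimate from T.\ref{tspec1} is indispensable, since it guarantees that $K$ stays bounded away from $1$ as the discretization is refined. Once $K<1$ is secured, T.\ref{contraction} gives a unique fixed point $\hat u^\ast\in\overline{B}_r(0)$ with $\hat u^\ast=\mathcal{G}_{n,h}f(\hat u^\ast)$, equivalently $A_{n,h}\hat u^\ast=f(\hat u^\ast)$; this $\hat u^\ast$ is the unique discrete approximation determined by $u_0$. The convergence $\hat u_k\conv{}{k\to\infty}\hat u^\ast$, together with the error estimate $\norm{}{\hat u^\ast-\hat u_k}\leq K^k(1-K)^{-1}\norm{}{\hat u_1-\hat u_0}$, then follows from R.\ref{rcontraction}.
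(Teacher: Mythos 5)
Your argument is essentially the paper's own proof: bound $\norm{}{\mathcal{G}_{m,h}}\leq 1/m$, combine this with the Lipschitz constant $c_f$ of $f$ on the ball to obtain the contraction constant $c_f/m$, require $c_f<m$, and invoke the Contraction Mapping Principle T.\ref{contraction} together with R.\ref{rcontraction}. You are in fact somewhat more thorough than the paper, which does not spell out the well-definedness of $\mathcal{G}_{n,h}$ via the invertibility of $A_{n,h}$, the uniformity in $h$ coming from T.\ref{tspec1}, or the self-mapping verification $M_f/m\leq r$ that you supply.
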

\begin{proof}
It suffices to show that under some particular conditions $\mathcal{G}_{m,h}f(\cdot)$ is a contraction, it can be seen that for any $u,v\in X(G)$ such that $Rg(u),Rg(v)\subseteq\overline{B}_r(u_0)$ for any $x\in\overline{G}$ we have
\begin{eqnarray}
\norm{}{\mathcal{G}_{m,h}f(u)-\mathcal{G}_{m,h}f(v)}&\leq&\norm{}{\mathcal{G}_{m,h}}\norm{}{f(u)-f(v)}\\
&\leq&\norm{}{\mathcal{G}_{m,h}}c_f(r)\norm{}{u-v}\\
&\leq&\frac{c_f(r)}{m}\norm{}{u-v}
\end{eqnarray}
the last expression implies that $r$ must be chosen such that $c_f(r)<m$ wich implies that $\mathcal{G}_{m,h}f(\cdot)$ becomes a contraction, and by T.\ref{contraction} the result follows.
\end{proof}

\begin{theorem}\label{tsem2}
If $u_h\in X_{m,h}(G)$ denotes the numerical solution to the particular representation of \eqref{eq3} given by
\begin{equation}
\left\{
\begin{array}{l}
A_{m,h}(u_h)=f(u_h), x\in G \\
B_{m,h}u_h=p^\dagger_{m,h}u_b, x\in \partial G
\end{array}
\right .
\end{equation}
and if $u:=\lim_{h\to0^+}u_h$, then we will have that
\begin{equation}
\norm{}{u-\hat{u}_k}\leq c_uh^{\nu_m}+\frac{(c_f(r)/m)^k}{m-c_f} (r\norm{}{A_{m,h}}+M_f)\mu(G)
\end{equation}
where $M_f:=\sup_{u\in \mathcal{H}^n(G;r)}|f(u)|$, $\mu(G)=\norm{}{1}$ and $\nu_m$ is the approximation order of the particular projection.
\end{theorem}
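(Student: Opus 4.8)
The plan is to split the total error into a discretization part and an iteration part by the triangle inequality,
\[
\norm{}{u-\hat{u}_k}\leq\norm{}{u-u_h}+\norm{}{u_h-\hat{u}_k},
\]
and to match each summand to one of the two terms in the claimed bound. The first summand is the discretization error between the continuous solution $u=\lim_{h\to0^+}u_h$ and the discrete fixed point $u_h$. Since $A$ is $H^n(G)$-coercive and exactly factorizable, $A_{m,h}$ is invertible with uniformly bounded inverse (the stability provided by T.\ref{stability1} together with $\norm{}{A_{m,h}^{-1}}\leq\norm{}{A^{-1}}$ from T.\ref{tspec1}), so the approximation order $\nu_m$ of the particular representation transfers from the operators to the solutions and yields $\norm{}{u-u_h}\leq c_uh^{\nu_m}$.

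For the iteration part I would invoke that, as established in the proof of T.\ref{tsem1}, the map $\Phi:=\mathcal{G}_{m,h}f(\cdot)$ is a strict contraction on the relevant ball with constant $K=c_f(r)/m$, whose unique fixed point is precisely $u_h$. Applying the a posteriori estimate of R.\ref{rcontraction} with the fixed point $u_h$ and seed $\hat{u}_0$ gives
\[
\norm{}{u_h-\hat{u}_k}\leq K^k(1-K)^{-1}\norm{}{\hat{u}_1-\hat{u}_0},
\]
after which it remains only to bound the seed displacement $\norm{}{\hat{u}_1-\hat{u}_0}$.

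To estimate the seed I would use $\hat{u}_1=\mathcal{G}_{m,h}f(\hat{u}_0)=A_{m,h}^{-1}f(\hat{u}_0)$ and write $\hat{u}_1-\hat{u}_0=A_{m,h}^{-1}\bigl(f(\hat{u}_0)-A_{m,h}\hat{u}_0\bigr)$, so that
\[
\norm{}{\hat{u}_1-\hat{u}_0}\leq\norm{}{A_{m,h}^{-1}}\bigl(\norm{}{f(\hat{u}_0)}+\norm{}{A_{m,h}}\,\norm{}{\hat{u}_0}\bigr).
\]
Here $\norm{}{A_{m,h}^{-1}}=1/m$ by T.\ref{tspec1}, while the pointwise containment $\hat{u}_0\in\overline{B}_r(0)$ and the definitions $M_f:=\sup_{u\in\mathcal{H}^n(G;r)}|f(u)|$ and $\mu(G)=\norm{}{1}$ give $\norm{}{\hat{u}_0}\leq r\mu(G)$ and $\norm{}{f(\hat{u}_0)}\leq M_f\mu(G)$. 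Substituting and using $(1-K)^{-1}=m/(m-c_f)$, so that $K^k(1-K)^{-1}\norm{}{A_{m,h}^{-1}}=(c_f(r)/m)^k/(m-c_f)$, collapses the iteration part exactly into $\frac{(c_f(r)/m)^k}{m-c_f}(r\norm{}{A_{m,h}}+M_f)\mu(G)$, which added to $c_uh^{\nu_m}$ yields the stated estimate.

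The main obstacle I anticipate is the first summand: turning the operator-level approximation order of the particular representation into a genuine $O(h^{\nu_m})$ bound on $\norm{}{u-u_h}$. This is a stability-plus-consistency argument of Lax type rather than a direct computation, since one must compare the fixed points of the continuous map $\mathcal{G}f$ and the discrete map $\mathcal{G}_{m,h}f$, controlling both the consistency error of $\mathcal{G}_{m,h}$ against $\mathcal{G}$ and the contribution of $f$; it is exactly here that the uniform invertibility of $A_{m,h}$ guaranteed by coercivity and exact factorizability is indispensable. The iteration part, by contrast, is a routine consequence of R.\ref{rcontraction} once the seed bound is in hand.
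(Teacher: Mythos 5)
Your proposal is correct and follows essentially the same route as the paper: the same triangle-inequality split $\norm{}{u-\hat{u}_k}\leq\norm{}{u-u_h}+\norm{}{u_h-\hat{u}_k}$, the same appeal to R.\ref{rcontraction} with contraction constant $c_f(r)/m$, and the same seed bound $\norm{}{\hat{u}_1-\hat{u}_0}\leq\norm{}{\mathcal{G}_{m,h}}(\norm{}{A_{m,h}}r+M_f)\mu(G)$ (the paper writes $\mathcal{G}_{m,h}$ where you write $A_{m,h}^{-1}$, which is the same operator with the same norm bound $1/m$). The one step you flag as the main obstacle --- converting the approximation order into $\norm{}{u-u_h}\leq c_uh^{\nu_m}$ --- the paper does not carry out via the Lax-type stability-plus-consistency argument you describe, but simply bounds $\norm{}{u-u_h}$ by $\norm{}{u-P_{m,h}u}$ and invokes the projector's order, tacitly identifying $u_h$ with $P_{m,h}u$; your reading of where the real work lies is therefore more careful than the paper's own treatment.
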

\begin{proof}
From R.\ref{rcontraction} we have
\begin{eqnarray}
\norm{}{u_h-\hat{u}_k}&\leq&\frac{(c_f(r)/m)^k}{1-(c_f(r)/m)}\norm{}{\hat{u}_1-\hat{u}_0}\\
&\leq&\frac{(c_f(r)/m)^k}{1-(c_f(r)/m)}\norm{}{\mathcal{G}_{m,h}f(\hat{u}_0)-\hat{u}_0}\\
&\leq&\frac{(c_f(r)/m)^k}{1-(c_f(r)/m)}\norm{}{\mathcal{G}_{m,h}}\norm{}{A_{m,h}(\hat{u}_0)-f(\hat{u}_0)}\\
%&\leq&\frac{(c_f(r)/m)^k}{(1-(c_f(r)/m))m}\norm{}{A_{m,h}(\hat{u}_0)-f(\hat{u}_0)}\\
%&=&\frac{(c_f(r)/m)^k}{(m-c_f(r))}\norm{}{A_{m,h}(\hat{u}_0)-f(\hat{u}_0)}\\
&\leq&\frac{(c_f(r)/m)^k}{(m-c_f(r))}(\norm{}{A_{m,h}}\norm{}{\hat{u}_0}+\norm{}{f(\hat{u}_0)})\\
&\leq&\frac{(c_f(r)/m)^k}{(m-c_f(r))}(\norm{}{A_{m,h}}r\norm{}{1}+M_f\norm{}{1})\\
&\leq&\frac{(c_f(r)/m)^k}{(m-c_f(r))}(r\norm{}{A_{m,h}}+M_f)\mu(G)
\end{eqnarray}
also it can be seen that
\begin{eqnarray}
\norm{}{u-\hat{u}_k}&\leq&\norm{}{u-u_h}+\norm{}{u_h-\hat{u}_k}\\
&\leq&\norm{}{u-P_{m,h}u}+\frac{(c_f(r)/m)^k}{(m-c_f(r))}(r\norm{}{A_{m,h}}+M_f)\mu(G)\\
&\leq&c_uh^{\nu_m}+\frac{(c_f(r)/m)^k}{(m-c_f(r))}(r\norm{}{A_{m,h}}+M_f)\mu(G)
\end{eqnarray}
from the last expression we get the desired result.
\end{proof}

\begin{example}
Given the semilinear boundary value problem:
\begin{equation}
\left\{
\begin{array}{l}
\Delta u=1+u^2, x\in [0,1]^2 \\
u=0, x\in \partial [0,1]^2
\label{eqex1}
\end{array}
\right .
\end{equation}
with $\Delta:=\partial_x^2+\partial_y^2$, one can rewrite it in a particular representation of the form
\begin{equation}
(a^\ast_{2,1/32}\mathcal{M}_{2,1/32}a_{2,1/32})u_h=\mathcal{M}_{2,1/32}(1+u_h^2)
\end{equation}
where $a_{2,1/32}\in\mathscr{A}_{2,1/32}(H^1_0(G))$ is the particular representation of the exact factor of $A_{2,1/32}:=p^\dagger_{2,1/32}\Delta p_{2,1/32}$ and $\mathcal{M}_{2,1/32}$ is the inner product matrix form relative to $H^1_0(G)$. Taking $\mathcal{G}_{2,1/32}$ to be defined by
\begin{equation}
\mathcal{G}_{2,1/32}:=(a^\ast_{2,1/32}\mathcal{M}_{2,1/32}a_{2,1/32})\mathcal{M}_{2,1/32}
\end{equation}
and since a simple computation permits us to see that $m=2\pi^2$, we have $\norm{}{\mathcal{G}_{2,1/32}}\leq(2\pi^2)^{-1}$, on the other hand it can be seen that
\begin{eqnarray}
\norm*{L^2([0,1]^2)}{f(u)-f(v)}&=&\norm*{L^2([0,1]^2)}{u^2-v^2}\\
&\leq&(\norm*{L^2([0,1]^2)}{u}+\norm*{L^2([0,1]^2)}{v})\\ \nonumber
&&\times \norm*{L^2([0,1]^2)}{u-v}\\
&\leq&2r\norm*{L^2([0,1]^2)}{u-v}
\end{eqnarray}
now combining the results presented above we obtain
\begin{eqnarray}
\norm*{L^2([0,1]^2)}{\mathcal{G}_{2,1/32}f(u_{1/32})-\mathcal{G}_{2,1/32}f(v_{1/32})}&\leq&\norm{}{\mathcal{G}_{2,1/32}}\nonumber\\ &&\times\norm*{L^2([0,1]^2)}{f(u_{1/32})-f(v_{1/32})}\nonumber\\
&\leq&\frac{r}{\pi^2}\norm*{L^2([0,1]^2)}{u_{1/32}-v_{1/32}}
\end{eqnarray}
so we need to choose $r<\pi^2$ to ensure that $\mathcal{G}_{m,h}f(\cdot)$ becomes a strict contraction, then by theorems T.\ref{tsem1} and T.\ref{tsem2} we will have that \eqref{eqex1} has a unique approximate solution $\hat{u}_k$ that satisfies the following estimate
\begin{equation}
\norm{}{u-\hat{u}_k}\leq c_u(1/32)^{4}+\frac{(r/\pi^2)^k}{2\pi^2-2r} (rK_A(32)^2+1+r^2).
\end{equation}
\end{example}

\subsection{Approximation of Dissipative Nonlinear Evolution Equations}
In this section we will focus our attention in evolution equations of the form
\begin{equation}
\left\{
\begin{array}{l}
A(u(t))+f(u(t))=u'(t) \\
u(0)=u_0, u_0\in H^m(G)
\end{array}
\right .
\label{eveq2}
\end{equation}
where such that $A(\cdot)\in\mathscr{A}(H^m(G))$ will be an exactly factorizable operator on the particular C*-algebra $\mathscr{A}(H^n(G))$ of the form
\begin{equation}
A(v):=\left\{
\begin{array}{l}
[a^\dagger\mathcal{D}(v)a] v) , x\in G\\
Bv=v_b, x\in\partial G
\end{array}
\right .
\end{equation}
with $D(v)\in\mathbb{C}^{n\times n}$ a symmetric Tensor function, i.e. $D(v)^\ast=D(v)$ and where $f\in C^{\alpha=1}(\mathcal{H}^k(G;r))$. 

In the following sections we will derive several useful estimates.

\subsubsection{Diffusion Tensor $\mathcal{D}(u):=\mathcal{K}$}\label{sect1}

Using the above results, the first case that we will study is the one consisting of $D(v):=\mathcal{K}$, with $\mathcal{K}$ constant in time and (SPD), in this cases and all the other we will study in this work we will consider that the boundary operator $B\in\mathcal{L}(H^n(G))$ allows $A(\cdot)\in\mathscr{A}(H^n(G))$ to be exactly factorizable in $\mathscr{A}(H^n(G))$ and that $f\in C^{\alpha=1}(\mathcal{H}^n(G;r))$.

If we take $g\in\mathcal{F}(H^1(G_\tau))$ to be the right hand side of \eqref{eveq2}, i.e. $g(v):=A(v)+f(v)$ and if we denote by $A[\mathcal{I}]$ the form corresponding to $A(\cdot)$ when $D(u):=\mathcal{I}=\mathbf{1}$ then for any two $u,v\in dom(A(\cdot))\cap \mathcal{H}^k_\delta(G;r)$ and since $f\in C^{\alpha=1}(\mathcal{H}^n(G;r))$ we will have
\begin{eqnarray}
\norm{}{g(u)-g(v)}&=&\norm{}{A(u-v)+f(u)-f(v)}\\
&\leq&\norm{}{A(u-v)}+\norm{}{f(u)-f(v)}\\
&\leq&\norm{}{a^\dagger\mathcal{K}a}\norm{}{u-v}+\norm{}{f(u)-f(v)}\\
&\leq&\norm{}{a}^2\norm{}{\mathcal{K}}\norm{}{u-v}\norm{}{f(u)-f(v)}\\
&\leq&\norm{}{\mathcal{K}}\norm{}{A[\mathcal{I}]}\norm{}{u-v}+c_f\norm{}{u-v}\\
&=&\norm{}{\mathcal{K}}\norm{}{A[\mathcal{I}]}+c_f)\norm{}{u-v}
\end{eqnarray}
hence $g\in C^{\alpha=1}(\mathcal{H}^n(G;r))$. In the following section we will generalize these results and present a useful technical theorem and its corresponding corollary.

\subsubsection{Diffusion Tensor $\mathcal{D}(\cdot)\in C^{\alpha=1}_+(\mathcal{H}^k_\delta(G;r),\mathbb{C}^{n\times n})$} In this section we discuss the case when $\mathcal{D}(\cdot)\in C^{\alpha=1}_+(\mathcal{H}^k_\delta(G;r),\mathbb{C}^{n\times n})$, in this particular cases we will also consider $\norm{}{\cdot}:=\norm*{\mathcal{L}^\infty(G;r)}{\cdot}$, also we will have that in these cases C*-identity is not satisfied in general, nevertheless these cases are also interesting in the analysis of nonlinear evolution equations, see example E.\ref{exe2}, $\mathcal{D}(u)$ is still (SPD) and is also bounded above by $M_D:=\sup_{u\in\mathcal{H}^k_\delta(G;r)}\norm{}{\mathcal{D}(u)}<\infty$. The conditions presented here imply that if $u,v\in dom(A(\cdot))\cap \mathcal{H}^k_\delta(G;r)$ and if we represent $A(u)$ by $A[u]u$ with $A[u]:=a^\dagger\mathcal{D}(u)a$ we will have
\begin{eqnarray}
\norm{}{A[u]}&=&\norm{}{a^\dagger\mathcal{D}(u)a}\leq\norm{}{a}\norm{}{a^\dagger}\norm{}{\mathcal{D}(u)}\leq M_D\mu(A[\mathcal{I}])
\end{eqnarray}
where $\mu(A[\mathcal{I}]):=\norm{}{a}\norm{}{a^\dagger}$ and also
\begin{eqnarray}
\norm{}{A(u)-A(v)}&=&\norm{}{A[u]u-A[v]v}\\
&\leq&\norm{}{A[u]u-A[u]v}+\norm{}{A[u]v-A[v]v}\\
&\leq&\norm{}{A[u]}\norm{}{u-v}+\norm{}{v}\norm{}{A[u]-A[v]}\\
&=&\norm{}{A[u]}\norm{}{u-v}+\norm{}{v}\norm{}{a^\dagger(\mathcal{D}(u)-\mathcal{D}(v))a}\\
&\leq&\norm{}{A[u]}\norm{}{u-v}+\norm{}{v}\norm{}{a}\norm{}{a^\dagger}\norm{}{\mathcal{D}(u)-\mathcal{D}(v)}\\
&\leq&M_D\mu(A[\mathcal{I}])\norm{}{u-v}+c_Dr\mu(A[\mathcal{I}])\norm{}{u-v}\\
&=&(M_D+c_Dr)\mu(A[\mathcal{I}])\norm{}{u-v}
\end{eqnarray}
from these we get the following estimate
\begin{eqnarray}
\norm{}{g(u)-g(v)}&\leq&\norm{}{A(u)-A(v)}+\norm{}{f(u)-f(v)}\\
&\leq&(M_D+c_Dr)\mu(A[\mathcal{I}])\norm{}{u-v}+c_f\norm{}{u-v}\\
&=&((M_D+c_Dr)(\mu(A[\mathcal{I}])+c_f)\norm{}{u-v}.\label{proof_l}
\end{eqnarray}
Hence $g\in C^{\alpha=1}(\mathcal{L}^\infty(G;r))$. The last results can be sumarized in the following.

\begin{theorem} \label{theorem_2}
Nonlinear evolution equations of the form \eqref{eveq2} with diffusion tensor $\mathcal{D}(\cdot)\in C^{\alpha=1}_+(\mathcal{H}^k_\delta(G;r),\mathbb{C}^{n\times n})$ and with $f\in C^{\alpha=1}(\mathcal{H}^k(G;r))$ have a unique local solution $u(t)\in \mathcal{H}^k_\delta(G;r)$ in the time interval $t\in(-\delta,\delta)$ with
\begin{equation}
\delta=\min\left\{\frac{r}{rM_D\mu(A[\mathcal{I}])+M_f},\frac{1}{(M_D+c_Dr)\mu(A[\mathcal{I}])+c_f}\right\}. 
\end{equation}
\end{theorem}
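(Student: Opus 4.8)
The plan is to recognize that the system \eqref{eveq2} is an instance of the abstract initial value problem \eqref{gform} with right-hand side $g(v):=A(v)+f(v)$, and then to invoke the Picard existence theorem T.\ref{picard}. The only hypothesis of that theorem which is not immediate is that $g$ be locally H\"older continuous with exponent one on $\mathcal{H}^k_\delta(G;r)$, so the substance of the argument is to establish a uniform Lipschitz bound for $g$ together with an upper bound $M_g$ for $\norm{}{g}$ on the ball of radius $r$.

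First I would verify the Lipschitz estimate. Since $A(u)=A[u]u$ with $A[u]:=a^\dagger\mathcal{D}(u)a$ depends nonlinearly on $u$, I cannot treat $A$ as a fixed bounded operator; instead I would use the add-and-subtract decomposition
\[
A[u]u-A[v]v=A[u](u-v)+(A[u]-A[v])v,
\]
bounding the first summand by $\norm{}{A[u]}\norm{}{u-v}\leq M_D\mu(A[\mathcal{I}])\norm{}{u-v}$ and the second by $\norm{}{v}\norm{}{a}\norm{}{a^\dagger}\norm{}{\mathcal{D}(u)-\mathcal{D}(v)}\leq c_Dr\,\mu(A[\mathcal{I}])\norm{}{u-v}$, where I use $\norm{}{v}\leq r$ on the ball together with the H\"older-one continuity of $\mathcal{D}(\cdot)$. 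Adding the Lipschitz bound $c_f\norm{}{u-v}$ for $f$ yields
\[
\norm{}{g(u)-g(v)}\leq\bigl((M_D+c_Dr)\mu(A[\mathcal{I}])+c_f\bigr)\norm{}{u-v},
\]
which is exactly the chain of inequalities running through \eqref{proof_l}; hence $g\in C^{\alpha=1}(\mathcal{H}^k_\delta(G;r))$ with Lipschitz constant $c_g:=(M_D+c_Dr)\mu(A[\mathcal{I}])+c_f$.

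Next I would bound $M_g:=\sup_{v\in\mathcal{H}^k_\delta(G;r)}\norm{}{g(v)}$. From $\norm{}{A(v)}\leq\norm{}{A[v]}\norm{}{v}\leq rM_D\mu(A[\mathcal{I}])$ and $\norm{}{f(v)}\leq M_f$ I obtain $M_g\leq rM_D\mu(A[\mathcal{I}])+M_f$. With the radius $R=r$ of $\overline{B}_r(0)$ fixing the geometric constraint $M_g\delta<R$, theorem T.\ref{picard} then produces a unique local solution $u(t)\in\mathcal{H}^k_\delta(G;r)$ on $t\in(-\delta,\delta)$ with $\delta=\min\{R/M_g,\,1/c_g\}$. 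Substituting the two bounds gives precisely the asserted value
\[
\delta=\min\left\{\frac{r}{rM_D\mu(A[\mathcal{I}])+M_f},\frac{1}{(M_D+c_Dr)\mu(A[\mathcal{I}])+c_f}\right\},
\]
which completes the argument.

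I expect the main obstacle to be the Lipschitz step, specifically controlling the genuinely nonlinear dependence of $A[u]$ on $u$ through the diffusion tensor. The add-and-subtract decomposition is what converts a nonlinear operator estimate into two linear-looking ones, and it relies essentially on the a priori bound $\norm{}{v}\leq r$, valid only because we restrict to $\mathcal{H}^k_\delta(G;r)$ and because $\mathcal{D}(\cdot)$ stays (SPD) and bounded by $M_D$ there. Everything else reduces to the contraction-mapping machinery already established in T.\ref{contraction} and packaged in T.\ref{picard}.
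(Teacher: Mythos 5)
Your argument is correct and follows essentially the same route as the paper: the add-and-subtract decomposition $A[u]u-A[v]v=A[u](u-v)+(A[u]-A[v])v$ with the a priori bound $\norm{}{v}\leq r$ is exactly the computation the paper carries out in the text leading to \eqref{proof_l}, and the conclusion via $M_g\leq rM_D\mu(A[\mathcal{I}])+M_f$, $c_g=(M_D+c_Dr)\mu(A[\mathcal{I}])+c_f$, and T.\ref{picard} matches the paper's proof of T.\ref{theorem_2}. No gaps to report.
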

\begin{proof}
From \eqref{proof_l} it is clear that $g\in C^{\alpha=1}_+(\mathcal{L}^\infty(G;r))$ and since
\begin{eqnarray}
\norm{}{g(v)}&=&\norm{}{A(v)+f(v)}\\
&\leq&\norm{}{A(u)}+\norm{}{f(u)}\\
&\leq&rM_D\mu(A[\mathcal{I}])+\norm{}{f(u)}
\end{eqnarray}
wich implies that
\begin{equation}
M_g=\sup_{v\in\mathcal{H}^k_\delta(G;r)}\norm{}{g(v)}\leq rM_D\mu(A[\mathcal{I}])+M_f
\end{equation}
now taking $c_g:=(M_D+c_Dr)\mu(A[\mathcal{I}])+c_f$ the result follows.
\end{proof}

\begin{corollary}
If $u_{h,n}$ denotes the numerical solution of the particular representation of \eqref{eveq2} after $n$ iterations of the Picard method and if we take $u_\delta:=\lim_{n\to\infty}u_{h,n}$ and if $u$ is the solution to \eqref{eveq2} for $t\in(-\delta,\delta)$ with $\delta$ defined in the theorem T.\ref{theorem_2} then we will have
\begin{equation}
\norm{}{u-u_{h,n}}\leq c_uh^{\nu_m}+\frac{(((M_D+c_Dr)\mu(A[\mathcal{I}])+c_f)\delta)^n}{1-((M_D+c_Dr)\mu(A[\mathcal{I}])+c_f)\delta}\delta(r\mu(A[\mathcal{I}])+M_f)
\end{equation}
\end{corollary}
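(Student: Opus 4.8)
The plan is to split the total error by the triangle inequality into a pure discretization part and a Picard iteration part,
\begin{equation}
\norm{}{u-u_{h,n}}\leq\norm{}{u-u_\delta}+\norm{}{u_\delta-u_{h,n}},\nonumber
\end{equation}
and to estimate each summand separately. Here $u_\delta=\lim_{n\to\infty}u_{h,n}$ is the fixed point of the discretized successive approximation operator, so the second summand is controlled by the contraction estimate of Remark R.\ref{rcontraction}, while the first summand is the genuine discretization error governed by the approximation order $\nu_m$ of the particular projector.

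First I would treat the iteration term $\norm{}{u_\delta-u_{h,n}}$. By Theorem T.\ref{theorem_2} the map $g(v):=A(v)+f(v)$ is Lipschitz with constant $c_g:=(M_D+c_Dr)\mu(A[\mathcal{I}])+c_f$, and for $\delta$ as prescribed there the associated operator $T$ (the analogue of \eqref{opT} applied to the particular representation of \eqref{eveq2}) is a strict contraction on $\mathcal{H}^k_\delta(G;r)$ with constant $K:=c_g\delta<1$. Since $u_{h,n}=T^n[u_{h,0}]$ and $u_\delta$ is its fixed point, Remark R.\ref{rcontraction} gives
\begin{equation}
\norm{}{u_\delta-u_{h,n}}\leq\frac{K^n}{1-K}\norm{}{u_{h,1}-u_{h,0}},\nonumber
\end{equation}
and the initial increment is bounded by $\norm{}{u_{h,1}-u_{h,0}}=\norm{}{T[u_{h,0}]-u_{h,0}}\leq M_g\delta$, with $M_g$ controlled exactly as in the proof of T.\ref{theorem_2}. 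Substituting $K=c_g\delta$ then reproduces the second term of the stated bound, the factor $\delta(r\mu(A[\mathcal{I}])+M_f)$ coming from this increment estimate.

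Second I would bound the discretization error $\norm{}{u-u_\delta}$. Following the device used in the proof of T.\ref{tsem2}, I would compare $u_\delta$ with the particular projection $P_{m,h}u$ of the true solution and invoke the defining estimate of the approximation order $\nu_m$, together with $\norm{}{P_{m,h}-\mathbf{1}}\leq c\,h^{\mu_m}$, to conclude $\norm{}{u-u_\delta}\leq c_uh^{\nu_m}$. Combining this with the iteration estimate above yields the desired inequality.

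The hard part will be making the discretization bound $\norm{}{u-u_\delta}\leq c_uh^{\nu_m}$ rigorous. The discrete fixed point $u_\delta$ solves the particular representation of \eqref{eveq2} rather than the continuous equation, so one must control how the representation error of $A_{m,h}$ and of the inner product matrix $\mathcal{M}_{m,h}$ propagates through the contraction to the fixed point, and verify that this propagation does not degrade the projection order $\nu_m$. In the stationary setting of T.\ref{tsem2} this is absorbed into the single estimate $\norm{}{u-P_{m,h}u}\leq c_uh^{\nu_m}$; the same device should carry over here, provided the particular projector acts compatibly on the time-dependent space $\mathcal{H}^k_\delta(G;r)$ uniformly in $t\in(-\delta,\delta)$, which is where I expect the main technical effort to lie.
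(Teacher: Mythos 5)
Your proposal follows essentially the same route as the paper's own proof: the triangle-inequality split $\norm{}{u-u_{h,n}}\leq\norm{}{u-u_\delta}+\norm{}{u_\delta-u_{h,n}}$, with the second term handled by Corollary C.\ref{corollary2} (i.e.\ Remark R.\ref{rcontraction} with $K=c_g\delta$ and increment $M_g\delta$) and the first by the projection estimate $c_uh^{\nu_m}$. The difficulty you flag at the end --- that $u_\delta$ as the discrete fixed point is not literally $P_{m,h}u$, so the bound $\norm{}{u-u_\delta}\leq c_uh^{\nu_m}$ needs justification --- is real, but the paper does not resolve it either: its proof simply redefines $u_\delta:=P_{m,h}u$, in tension with the statement's definition $u_\delta:=\lim_{n\to\infty}u_{h,n}$.
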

\begin{proof}
If we take $u_\delta:=P_{m,h}u$ for some particular projector $P_{m,h}\in\mathcal{P}(\mathcal{H}_\delta^1(G;r))$ and from from corollary C.\ref{corollary2} we will have
\begin{eqnarray}
\norm{}{u-u_{h,n}}&\leq&\norm{}{u-u_\delta}+\norm{}{u_\delta-u_{h,n}}\\
&\leq&c_uh^{\nu_m}\nonumber \\
&&+\frac{(((M_D+c_Dr)\mu(A[\mathcal{I}])+c_f)\delta)^n}{1-((M_D+c_Dr)\mu(A[\mathcal{I}])+c_f)\delta}\delta(r\mu(A[\mathcal{I}])+M_f)
\end{eqnarray}
where $\nu_m$ is the projection approximation order. 
\end{proof}

\subsubsection{Examples} In this section we will present some examples of implementation of the techniques presented before in the estimation of numerical solutions to the particular forms of some equations of mathematical physics.

\begin{example}\label{exe1}
Given a nonlinear Schrodinger Equation of the form
\begin{equation}
\left
\{
\begin{array}{l}
-\Delta u+|u|^2u=i\partial_tu, x\in G\\
u=0, x\in\partial G\\
u(x,0)=u_0, u_0\in \mathcal{H}(G;r)
\end{array}
\right .
\label{eqex2}
\end{equation}
where $\mathcal{H}(G;r)$ is a discretizable Hilbert space, if we take the particular representation of \eqref{eqex2} with respect to some $P_{m,h}\in\mathcal{P}(\mathcal{H}(G;r))$ then we will obtain
\begin{equation}
\left \{
\begin{array}{l}
u_{h}'(t)=i[\mathcal{M}_{m,h}^{-1}a_{m,h}^\ast\mathcal{M}_{m,h}a_{m,h}]u_{h}+i|u_h|^2u_h\\
u_h(0)=p^\dagger_{m,h}u_0
\end{array}
\label{eqex3}
\right .
\end{equation}
it can be seen that
\begin{equation}
\norm*{\mathcal{H}_{m,h}(G;r)}{|u_h|^2u_h-|v_h|^2v_h}\leq 3r^2\mu(G)\norm*{\mathcal{H}_{m,h}(G;r)}{u_h-v_h}
\end{equation}
and
\begin{equation}
\norm{}{A_{m,h}}\leq\norm{}{\mathcal{M}_{m,h}^{-1}a_{m,h}^\ast\mathcal{M}_{m,h}a_{m,h}}\leq\kappa(\mathcal{M}_{m,h})\norm{}{a_{m,h}}^2\leq\frac{K_a}{h^2}
\end{equation}
wich implies that
\begin{eqnarray}
\norm*{\mathcal{H}(G;r)}{A_{m,h}u_h-A_{m,h}v_h}&\leq&\norm{}{A_{m,h}}\norm*{\mathcal{H}(G;r)}{u_h-v_h}\\
&\leq&\frac{K_a}{h^2}\norm*{\mathcal{H}(G;r)}{u_h-v_h}
\end{eqnarray}
then by T.\ref{picard} we will have that \eqref{eqex3} admits a unique local solution $u_{m,h}(t)$ for $t\in(-\delta,\delta)$ with
\begin{equation}
t=\min\left\{\frac{1}{K_a/h^2+3r^2\mu(G)},\frac{1}{K_a/h^2+r^2}\right\}
\end{equation}
also by C.\ref{corollary2} we will have
\begin{equation}
\norm{}{u-u^k_{m,h}}\leq c_uh^{\nu_m}+\frac{((K_a/h^2+3r^2\mu(G))\delta)^k}{1-(K_a/h^2+3r^2\mu(G))\delta}(K_a/h^2+r^2)\delta.
\end{equation}
\end{example}

\begin{example}\label{exe2}
Given the nonlinear diffusion equation
\begin{equation}
\left\{
\begin{array}{l}
\partial_tu=\partial_x(u^2\partial_xu)\\
u(x,0)=u_0\in\mathcal{H}(G;r)\\
u(0,t)=u(1,t)=0
\end{array}
\right .
\label{exeq5}
\end{equation}
where $\mathcal{H}(G;r)$ is a discretizable Hilbert space, one can obtain its particular representation with respect to some $P_{m,h}\in\mathcal{P}(\mathcal{H}(G;r))$ in the form
\begin{equation}
\left\{
\begin{array}{l}
u'_h(t)=[\mathcal{M}_{m,h}^{-1}d^\dagger_{m,h}\mathcal{M}_{m,h}(u_h^2)d_{m,h}]u_h\\
u_h(0)=p^\dagger_{m,h}u_0
\label{exeq7}
\end{array}
\right .
\end{equation}
we have seen before that $f(v)=v^2\in C^{\alpha=1}_+(\mathcal{H}(G;r))$ wich leads to the estimates
\begin{eqnarray}
\norm{}{A_{m,h}[u_h]}&=&\norm{}{\mathcal{M}_{m,h}^{-1}d^\dagger_{m,h}\mathcal{M}_{m,h}(u_h^2)d_{m,h}}\\
&\leq&\kappa({\mathcal{M}_{m,h}})\norm{}{d_{m,h}}\norm{}{d^\dagger_{m,h}}r^2\norm{}{\mathbf{1}}\\
&\leq&r^2\frac{K_a}{h^2}
\end{eqnarray}
on the other hand we will have by \eqref{proof_l}
\begin{equation}
\norm{}{A_{m,h}[u_h]u_h-A_{m,h}[v_h]v_h}\leq3r^2\frac{K_a}{h^2}\norm{}{u_h-v_h}
\end{equation}hence by T.\ref{theorem_2} exists a unique local solution $u_h(t)$ of \eqref{exeq7} for $t\in(-\delta,\delta)$ with
\begin{equation}
\delta=\min\left\{\frac{h^2}{3r^2K_a},\frac{h^2}{r^2K_a}\right\}=\frac{h^2}{3r^2K_a}
\end{equation}
moreover by C.\ref{corollary2} we will also have that a solution to \eqref{exeq5} will satisfy the estimate
\begin{equation}
\norm{}{u-u_h^k}\leq h^{\nu_m}+\frac{(3r^2K_a\delta/h^2)^k}{1-(3r^2K_a\delta/h^2)}\frac{r^3K_a\delta}{h^2}
\end{equation}
\end{example}

\begin{example} For a vorticity transport problem in the vorticity-stream function approach that has the form
\begin{equation}
\left\{
\begin{array}{l}
\partial_tu=\mathcal{J}\nabla v\cdot\nabla u+\frac{1}{Re}\Delta u\\
\Delta v=-u\\
u(x,0)=u_0, u_0\in\mathcal{H}(G;r)\\
B_1u=u_b, x\in \partial G
B_2v=v_b, x\in \partial G
\end{array}
\right .
\label{vortex}
\end{equation}
where $Re$ is a fixed Reynolds number and where $\mathcal{J}\in\mathbb{R}^{2\times 2}$ is given by
\begin{equation}
\mathcal{J}=\left(\begin{array}{cc}
    0 & -1\\
    1 & 0
  \end{array}\right)
\end{equation}
since we assume $\mathcal{H}(G;r)$ is factorizable we can obtain a particular representation of \eqref{vortex} with respect to some $P_{m,h}\in \mathcal{P}(\mathcal{H}(G;r))$ given by
\begin{equation}
\left\{
\begin{array}{l}
u'_h(t)=[\mathcal{J}a_{m,h} \mathcal{G}_{m,h} u_h\cdot a_{m,h}]u_h+\frac{1}{Re}[\mathcal{M}^{-1}_{m,h}\mathcal{M}_{m,h}[a_{m,h}](a_{m,h})] u_h\\
u_h(0)=p_{m,h}^\dagger u_0\\
\end{array}
\right .
\label{vortexp}
\end{equation}
taking $B_{m,h}[v],A_{m,h}\in\mathcal{H}^\ast_{m,h}(G;r)$ to be defined in the form
\begin{eqnarray}
B_{m,h}[v]&:=&\mathcal{J}a_{m,h} \mathcal{G}_{m,h} v\cdot a_{m,h}\\
A_{m,h}&:=&\frac{1}{Re}\mathcal{M}^{-1}_{m,h}\mathcal{M}_{m,h} [a_{m,h}](a_{m,h})
\end{eqnarray}
we can rewrite \eqref{vortexp} in the form
\begin{equation}
\left\{
\begin{array}{l}
u'_h(t)=B_{m,h}[u_h]u_h+A_{m,h}u_h\\
u_h(0)=p_{m,h}^\dagger u_0\\
\end{array}
\right .
\end{equation}
and taking $g(v):=B_{m,h}[v]v+A_{m,h}v$ we can estimate $M_g$, first we will estimate $\norm{}{B_{m,h}[v]}$
\begin{eqnarray}
\norm{}{B_{m,h}[v]}&=&\norm{}{\mathcal{J}a_{m,h} \mathcal{G}_{m,h} v\cdot a_{m,h}}\\
&\leq&\norm{}{\mathcal{G}_{m,h}}\norm{}{a_{m,h}}^2\norm{}{v}\\
&\leq&\frac{K_ar}{mh^2}
\end{eqnarray}
so $M_g$ will satisfy
\begin{equation}
M_g\leq\frac{K_ar}{h^2}(\frac{r}{m}+\frac{1}{Re}\kappa(\mathcal{M}_{m,h}))
\end{equation}
on the other hand we will have that
\begin{eqnarray}
\norm{}{g(u)-g(v)}&\leq&\norm{}{B_{m,h}[u]u-B_{m,h}[u]v}+\norm{}{B_{m,h}[u]v-B_{m,h}[v]v}\nonumber \\
&&+\norm{}{A_{m,h}(u-v)}\\
&\leq&2\frac{K_ar}{mh^2}\norm{}{u-v}+\frac{K_a}{Reh^2}\kappa(\mathcal{M}_{m,h})\norm{}{u-v}\\
&=&\frac{K_a}{h^2}(\frac{2r}{m}+\frac{1}{Re}\kappa(\mathcal{M}_{m,h}))\norm{}{u-v}
\end{eqnarray}
so it can be seen that $g\in C^{\alpha=1}(\mathcal{H}^\ast_{m,h}(G;r))$ and we can use T.\ref{picard} and C.\ref{corollary2} to perform similar estimates to the performed in examples E.\ref{exe1} and E.\ref{exe2}.
\end{example}

\pagebreak

\begin{center}
 \normalsize\scshape{Acknowledgements}
\end{center}

\medskip

Thanks: To God... he does know why, to Mirna, for her love and support, Special thanks to Stanly Steinberg for his support, advice and for taking time to read the first drafts of this research, to Concepci{\'o}n Ferrufino, Rosibel Pacheco, Jorge Destephen, Adalid Guti{\'e}rrez, Eduardo Bravo and Francisco Figeac for their support and advice.

\end{document}